\documentclass[11pt]{article}
\author{Richard Olu Awonusika\footnote{Corresponding author}  \hspace{0.8cm} Olasupo John Felemu	\\
	\texttt{richard.awonusika@aaua.edu.ng \hspace{0.8cm} olasupo.felemu@aaua.edu.ng
	}\\
	Olawale Olaonipekun Ajijola\hspace{0.8cm} Yoyinade Joose Aborisade	\\
	\texttt{olawale.ajijola@aaua.edu.ng} \hspace{0.8cm} \texttt{yoyinade.aborisade@aaua.edu.ng}\\
	Department of Mathematical Sciences\\ Adekunle Ajasin University, P.M.B. 001, Akungba-Akoko, Ondo State, Nigeria\\
}
\title{\textbf{Numerical Solution of Pantograph Delay Integrodifferential Equation of Volterra Type: Collocation Method Based on Shifted Jacobi Polynomials}}
\date{}

\usepackage{mathrsfs}
\usepackage{lscape}
\usepackage{array}
\usepackage{amsthm}
\usepackage{amsmath}
\usepackage{amssymb}
\usepackage[margin=1.25 in]{geometry}
\newtheorem{theorem}{Theorem}[section]
\newtheorem{example}{Example}[section]
\newtheorem{lemma}{Lemma}[section]
\theoremstyle{definition}
\theoremstyle{example}

\theoremstyle{remark}

\numberwithin{equation}{section}
\numberwithin{theorem}{section}
\usepackage{hyperref}

\usepackage{enumerate}
\usepackage{enumerate}
\usepackage{color}
\usepackage[square, numbers, comma, sort&compress]{natbib}
\usepackage{float}
\newmuskip\pFqmuskip

\newcommand*\pFq[6][8]{%
	\begingroup 
	\pFqmuskip=#1mu\relax
	\mathchardef\normalcomma=\mathcode`,
	\mathcode`\,=\string"8000
	\begingroup\lccode`\~=`\,
	\lowercase{\endgroup\let~}\pFqcomma
	{}_{#2}F_{#3}{\left[\genfrac..{0pt}{}{#4}{#5};#6\right]}%
	\endgroup
}
\newcommand{\pFqcomma}{{\normalcomma}\mskip\pFqmuskip}

\begin{document}
	\maketitle
	%

\begin{abstract}
Pantograph arises in electric trains, material modelling, and the modelling of quantum dot lasers. Pantograph integrodifferential equations are integrodifferential equations involving proportional delays; and they appear in fields such as  electrodynamics, epidemiology, control theory, astrophysics, economics,  and engineering. This research presents an efficient collocation method based on shifted Jacobi polynomials for obtaining numerical solutions of a class of first order pantograph delay integrodifferential equation of Volterra type with an initial condition. The proposed method expresses the solution of the governing equation as a shifted Jacobi polynomial series with expansion coefficients which are to be determined.   Collocating at the roots of the shifted Jacobi polynomials, the underlying problem is reduced to a system of
algebraic equations in the unknown expansion coefficients of the shifted Jacobi polynomial series solution. Newton's method is subsequently used to solve the resulting system of algebraic equations and numerical values of the expansion coefficients are obtained. The obtained coefficients are substituted into the assumed series solution to obtain the required numerical solutions. The applicability, reliability, efficiency, and accuracy of the shifted Jacobi collocation method are demonstrated through illustrative examples. Results obtained using the proposed method are compared with exact solutions and other published results. Comparisons of errors which are presented in tables reveal that our method approximates the solution better than the methods under comparison. 
\end{abstract}
	\textbf{Keywords}: Pantograph equation, Volterra integral equation, Initial value problem, Shifted Legendre collocation method, Shifted Chebyshev collocation methods, Approximate solution. \\
	\textbf{MSC (2020)}:  33C05, 33C45, 34A08, 34B16, 35A08, 65L05.

	\section{Introduction}
The pantograph differential equation in the form of a special delay first order ordinary differential equation was first investigated by Ockendon and Tayler \cite{Ock}:
\begin{equation}\label{eqP}
\phi'(x)=a\phi(x)+b\phi(qx),
 \end{equation}
where $a,b$ are real constants and $0<q< 1$. The term $\phi(qx)$ depicts that the delay is not constant, but proportional to the current $x$, with  $0<q< 1$. This form of pantograph equation arises in areas that include epidemiology, electrodynamics,  engineering, and economics \cite{Adel}, \cite{Vanani2}. Pantograph was initially introduced in scaling and drawing, and were further discussed in the modelling of lasers, especially quantum dot lasers, material modelling, and electric trains (\cite{Ahmad2021}, \cite{Aourir2025}, \cite{Behera2022}). Unlike standard ordinary differential equations, the simplest linear pantograph equations cannot be solved in terms of elementary functions. For several analytical and numerical methods of solving \eqref{eqP}, see  \cite{Bhalekar},  \cite{Iserles}, \cite{Kato}, \cite{Liu},  \cite{Pat}, \cite{Yang}.   Lane-Emden type equations involving proportional delays were considered in   \cite{Adel}, \cite{Alsaw}, \cite{AwoMogbodelay}, \cite{Azin2021}, \cite{Elbrahimi2019}, \cite{14Izadi2021}, \cite{Nuw},   \cite{Rabiei},  \cite{Rah}, \cite{Shaikh2019},  \cite{Vichitkunakorn}, \cite{Yuttanan}.

Integral equations arise in the modeling of several real-life phenomena in mathematical biology, chemistry, electricity, elasticity, magnetism, mathematical economics, epidemiology,  population growth theory, fluid mechanics,  mathematical physics, engineering, potential theory, acoustics, 
renewal theory,  and geophysics (\cite{Bruner2017}, \cite{Hethcote1980},  \cite{Miller}, \cite{Pettitt1982}, \cite{Ting1981}).  Integral equations are basically classified into two -  \textit{Fredholm} integral equations (\cite[Ch. 1]{Zemyan2012}) and \textit{Volterra} integral equations (\cite[Ch. 4]{Zemyan2012}). An integral equation having constant limits of integration is called a Fredholm integral equation, while the one with  variable limits of integration is known as a Volterra integral equation. Integral equations involving the derivative(s) of the unknown variable  are known as integrodifferential equations (\cite[Ch. 5]{Zemyan2012}).
 One of the main areas of integral equations which have received the most attention from researchers in recent years is  pantograph delay integrodifferential equation. Pantograph delay integrodifferential equations are a class of functional integral equations featuring proportional delays, while the (Volterra or Fredholm) integral terms model systems with memory, feedback, and scaling, such as biological evolution, population dynamics, and control theory  \cite{3Dehghan}, \cite{2Ezz-Eldien},  \cite{4Fox}, \cite{1Kuang}. 
 
 Pantograph integrodifferential equations  are generally considered as integrodifferential equations involving proportional delays. Pantograph delay integrodifferential equations of Volterra type arise in fields such as astrophysics, control theory, biology,  electrodynamics, and economics (\cite{Buhmann}). These features
 ensure the relevance of the pantograph integrodifferential equation in modeling the dynamics of populations according to the age distribution and processes of biological evolution, where the current behaviour of the system depends on its historical behaviours at varying scales. Furthermore, pantograph integrodifferential equations can help optimise and design systems that involve scaling and delayed feedback such as communication and robots systems \cite{7Ajello},  \cite{9Alsuyuti}, \cite{10Bocharov}, \cite{8Elkot}, \cite{6Zaky}. The presence of a pantograph term in the integrodifferential equation makes the process of obtaining its exact solution difficult, thus seeking efficient and reliable methods for obtaining their numerical solutions becomes the main goal of researchers in the field.  
 
 Several analytical and numerical methods, such as tau method, Taylor polynomial method, Laguerre matrix method, spectral tau method, the Hermite polynomial method, Adomian decomposition method, operational tau method, and Haar wavelet method have been used to obtain solutions of pantograph integrodifferential equations (\cite{29Sezer2008}, \cite{27Shakeri2010},  \cite{26Yuzbası2015}). Recently, in \cite{Behera2022}, Behera and Ray applied  Euler wavelets to obtain numerical solutions of pantograph Volterra delay integrodifferential equations.  Jacobi tau spectral approach was used in \cite{Ezz-Eldien2025} to solve multi-pantograph integrodifferential equations. In \cite{Ahmad2021}, Haar Wavelets method was used to solve pantograph delay integrodifferential equations.  In \cite{11Zhao}, the Sinc collocation approach
 for solving pantograph delay integrodifferential equations was employed. The Galerkin spectral technique
was applied in \cite{12Alsuyuti} and \cite{13Zaky} to solve a class of partial differential and integral
equations with arbitrary orders. The authors in \cite{14Zhao} used Bernstein tau spectral  and Lagrange interpolation
collocation methods for the solution of  pantograph delay integrodifferential
equations. 

In \cite{15Zaky}, the authors implemented the multivariate Jacobi approximation collocation technique for multi-dimensional weakly singular nonlinear integral equations with nonsmooth
solutions.  Ji et al. \cite{21Ji} derived derivative and
pantograph operational matrices for Chebyshev spectral collocation
method  to solve pantograph integrodifferential equations. Using the operational technique
based on Legendre polynomials, Jafari et al. \cite{22Jafari} solved pantograph nonlinear
integrodifferential equations. Wang et al. \cite{23Wang} employed an operational procedure based on Jacobi polynomials  for a high-order
nonlinear two-dimensional integrodifferential equation. Darania and
Sotoudehmaram \cite{24Darania} applied the multistep collocation technique for numerical solutions of nonlinear
delay integral equations. For the bilinear neural network approach and neural network-based symbolic techniques for various types of partial integral
and pantograph-type delay partial differential equations, see \cite{19Alkhezi}, \cite{25Elkot}, \cite{16Liu},    \cite{26Tedjani},  \cite{20Xie}, \cite{27Zaky}, \cite{17Zhang}, \cite{18Zhong}.

In this research, we use shifted Jacobi collocation method   to obtain numerical solutions of the pantograph delay first order integrodifferential equation of Volterra type	
\begin{align}
	\phi'(x)=&f(x)+ a_{1}(x)\phi(x)+a_{2}(x)\phi( \mu x)+\int_0^x K_{1}(x,s)u(s)\, ds+\int_0^{\mu x} K_{2}(x,s)u(s) \, ds,\label{eq main 1}\\
	\phi(0)=&\gamma.\label{eq main 1IC}
\end{align}
 Here $\gamma$ is a constant; $x\in I:=[0,X]$; $\mu\in(0,1)$; the functions $K_{1}(x,s)$ defined on $\mathcal{R}:=\left\lbrace (x,s)\in I\times I \right\rbrace $ and $K_{2}(x,s)$  defined on $\widetilde{\mathcal{R}}:=\left\lbrace (x,s)\in I\times [0,\mu x] \right\rbrace $ are given; $a_{j}$ $(j=1,2)$ are given bounded functions; and $f:[0,\infty)\rightarrow \mathbb{R}$. In the proposed method, one assumes that the solution of \eqref{eq main 1}-\eqref{eq main 1IC} can be expressed in the form of a shifted Jacobi polynomial series. Upon substituting the assumed series solution into  \eqref{eq main 1}-\eqref{eq main 1IC}, one is required to apply a differentiation formula for shifted Jacobi polynomials.  Collocating at the zeros of the shifted Jacobi polynomials, we get a set of algebraic equations, which are subsequently solved for the expansion coefficients of the shifted Jacobi polynomial series via Newton's iteration technique. The obtained values of these coefficients are in turn substituted into the assumed polynomial series solution to obtain the required numerical solutions.  The convergence rate of shifted Jacobi polynomial series for integrodifferential equations of Volterra type is discussed. Examples of the governing equation are presented to illustrate the proposed method's reliability, effectiveness, and accuracy. The obtained numerical solutions are compared with the exact solutions and other published results.  Comparisons of results are demonstrated in tables. Our proposed collocation scheme yields more accurate results compared to other existing methods, which is a clear indication that the proposed method is reliable, efficient, and accurate for solving pantograph delay integrodifferential equations of Volterra type. 

%

\section{Preliminaries}\label{App Jac}

\subsection{Jacobi Polynomials}\label{Sec ClasJac}	
The classical $\ell$th-degree Jacobi polynomials $J_{\ell}^{(\alpha,\beta)}=J_{\ell}^{(\alpha,\beta)}(t)$ ($\ell=0,1,2,\dots$; $\alpha,\beta> -1;t\in[-1,1]$) are solutions of the Jacobi  differential equation ($y:=J_{\ell}^{(\alpha, \beta)}(t)$)
\begin{equation}\label{eq jap2}
	\left( 1-t^{2}\right) \frac{d^2y}{dt^2} - (\alpha-\beta+(\alpha+\beta+2)t) \frac{dy}{dt} + \ell(\ell+\alpha+\beta+1) y =0, 
\end{equation}
and they admit the generating function  
\begin{equation}\label{eq p1}
	2^{\alpha+\beta}\Lambda^{-1}\left(1-z+\Lambda\right)^{-\alpha}\left(1+z+\Lambda\right)^{-\beta}=\sum\limits_{k=0}^{\infty}J_{k}^{(\alpha,\beta)}(t)z^{k}, \,\, \Lambda=\sqrt{1-2t z+z^{2}},\,|z|<1.
\end{equation}
Jacobi polynomials $J_{\ell}^{(\alpha,\beta)}(t)$ have the finite  series formulation
\begin{align}\label{eq po ser}
	J_{\ell}^{(\alpha,\beta)}(t)=&\frac{\Gamma(\ell+\alpha+1)}{\Gamma(\ell+\alpha+\beta+1)}\sum_{k=0}^{\ell}{\ell\choose k}\frac{\Gamma(\ell+\alpha+\beta+k+1)}{2^{k}\Gamma(\alpha+k+1)\ell!}(t-1)^{k}
	=\sum_{k=0}^{\ell}F_{\ell,k}^{\alpha,\beta}\left( \frac{1+t}{2}\right) ^{k},
\end{align}
where
\begin{equation}\label{eq ecjac}
	F_{\ell,k}^{\alpha,\beta}:=\frac{(-1)^{\ell-k}\Gamma(\ell+\beta+1)\Gamma(\ell+\alpha+\beta+k+1)}{\Gamma(\ell+\alpha+\beta+1)\Gamma(\beta+k+1)(\ell-k)!k!};
\end{equation}
and satisfying  the differentiation  formula
\begin{equation}\label{eqc6}
	\frac{d^{p}}{dt^{p}}J_{\ell}^{(\alpha,\beta)}(t)=\frac{1}{2^{p}}\frac{\Gamma(\ell+p+\alpha+\beta+1)}{\Gamma(\ell+\alpha+\beta+1)} J_{\ell-p}^{(\alpha+p,\beta+p)}(t) \qquad (p=1,2,\dots).
\end{equation}
The following reflection symmetry and pointwise identities hold.
\begin{equation}\label{jap8}
J_{\ell}^{(\alpha,\beta)}(-t)=(-1)^{\ell}J_{\ell}^{(\beta,\alpha)}(t), \quad J_{\ell}^{(\alpha,\beta)}(1)=\frac{\Gamma(\ell+\alpha+1)}{\ell!\Gamma(\alpha+1)}.
\end{equation}
As orthogonal polynomials, they admit the integral formula 
\begin{align}
	&\int_{-1}^{1}J_{\ell}^{(\alpha,\beta)}(t)J_{m}^{(\alpha,\beta)}(t)\left( 1-t\right)^{\alpha}\left( 1+t\right)^{\beta}\,dt
	=\kappa^{\alpha,\beta}_{\ell}\delta_{\ell m} \qquad (\ell,m=0,1,2,\dots),
\end{align}
where  $\kappa^{\alpha,\beta}_{\ell}$ are given by 
\begin{equation}\label{eq eta}
	\kappa^{\alpha,\beta}_{\ell}= \frac{2^{\alpha+\beta+1}}{2\ell+\alpha+\beta+1}\frac{\Gamma(\ell+\alpha+1)\Gamma(\ell+\beta+1)}{\Gamma(\ell+1)\Gamma(\ell+\alpha+\beta+1)}.
\end{equation}
Legendre polynomials, Chebyshev polynomials of the first, second, third, and fourth kinds are special cases of Jacobi polynomials with the following relations.
\begin{align}\label{jap4}
&P_{\ell}(t):=J_{\ell}^{(0,0)}(t) \qquad \mbox{(Legendre polynomials)}\\
&T_{\ell}(t): = \frac{\ell!\Gamma\left( \frac{1}{2}\right) }{\Gamma\left( \ell+\frac{1}{2}\right) }J_{\ell}^{\left( -\frac{1}{2},-\frac{1}{2}\right) }(t)\qquad \mbox{(Chebyshev polynomials of the first kind)}\\
&U_{\ell}(t):=\frac{\ell!\Gamma\left( \frac{3}{2}\right) }{\Gamma\left( \ell+\frac{3}{2}\right) }J_{\ell}^{\left( \frac{1}{2},\frac{1}{2}\right) }(t) \qquad \mbox{(Chebyshev polynomials of the second kind)}\\
&V_{\ell}(t):= \frac{\ell!\Gamma\left( \frac{1}{2}\right) }{\Gamma\left( \ell+\frac{1}{2}\right) }J_{\ell}^{\left( -\frac{1}{2},\frac{1}{2}\right) }(t)\qquad \mbox{(Chebyshev polynomials of the third kind)}\\
&W_{\ell}(t):= \frac{\ell!\Gamma\left( \frac{3}{2}\right) }{\Gamma\left( \ell+\frac{3}{2}\right) }J_{\ell}^{\left( \frac{1}{2},-\frac{1}{2}\right) }(t)\qquad \mbox{(Chebyshev polynomials of the fourth kind)}.
\end{align}

\subsection{Shifted Jacobi Polynomials}\label{Sec ShifJac}	
The $\ell$th-degree shifted Jacobi polynomials
$\mathsf{J}_{\ell}^{(\alpha,\beta)}(t)$ $(t\in[0,1])$ are defined by the finite  series representation
\begin{align}\label{eq po ser1}
	\mathsf{J}_{\ell}^{(\alpha,\beta)}(t):=
	J_{\ell}^{(\alpha,\beta)}\left( 2t-1\right) 
	=&\sum_{k=0}^{\ell}F_{\ell,k}^{\alpha,\beta}t^{k},
\end{align}
where
$F_{\ell,k}^{\alpha,\beta}$ is as given in \eqref{eq ecjac}. In particular, we have the following special values.
\begin{align}\label{eq sifpo}
	\mathsf{J}_{\ell}^{(\alpha,\beta)}(0)=(-1)^{\ell}\frac{\Gamma(\ell+\beta+1)}{\Gamma(\beta+1)\ell!}, \qquad \mathsf{J}_{\ell}^{(\alpha,\beta)}(1)=\frac{\Gamma(\ell+\alpha+1)}{\Gamma(\alpha+1)\ell!}.                                                                                    
\end{align}
Furthermore, the following differentiation formulae hold $(r=0,1,2,\dots)$.
\begin{align}
	\frac{d^{r}}{dt^{r}}	\mathsf{J}_{\ell}^{(\alpha,\beta)}(0)=&\frac{(-1)^{\ell-r}\Gamma(\ell+\beta+1)\Gamma(\ell+r+\alpha+\beta+1)}{\Gamma(r+\beta+1)\Gamma(\ell+\alpha+\beta+1)\Gamma(\ell-r+1)}\label{eq diff shifted1}\\
	\frac{d^{r}}{dt^{r}}	\mathsf{J}_{\ell}^{(\alpha,\beta)}(1)=&\frac{\Gamma(\ell+\alpha+1)\Gamma(\ell+r+\alpha+\beta+1)}{\Gamma(r+\alpha+1)\Gamma(\ell+\alpha+\beta+1)\Gamma(\ell-r+1)}\label{eq diff shifted-1}\\
	\frac{d^{r}}{dt^{r}}	\mathsf{J}_{\ell}^{(\alpha,\beta)}(t)=&\frac{\Gamma(\ell+r+\alpha+\beta+1)}{\Gamma(\ell+\alpha+\beta+1)}	\mathsf{J}_{\ell-r}^{(\alpha+r,\beta+r)}(t).
	\label{eq diff shifted}
\end{align}

Consider the weight function $w^{\alpha,\beta}=w^{(\alpha,\beta)}(t)=(1-t)^{\alpha}t^{\beta}$. We define the inner product and norm in the weighted space 
\begin{equation}
	L^{2}_{w^{\alpha,\beta}}([0,1])=\left\lbrace \phi: \phi \mbox{ is measurable and } \left\| \phi\right\|_{w^{\alpha,\beta}}<\infty\right\rbrace,
\end{equation}
 respectively, by
\begin{align}
	(\phi,\varphi)_{w^{\alpha,\beta}}=\int_{0}^{1}\phi(t)\varphi(t)w^{(\alpha,\beta)}(t)\,dt, \qquad \left\| \phi\right\|_{w^{\alpha,\beta}}=(\phi,\phi)^{1/2}_{w^{\alpha,\beta}}. 
\end{align}
A complete $L^{2}_{w^{\alpha,\beta}}[0,1]$-orthogonal system consists of a set of shifted Jacobi polynomials, namely,
\begin{align}
	&\int_{0}^{1}\mathsf{J}_{\ell}^{(\alpha,\beta)}(t)\mathsf{J}_{m}^{(\alpha,\beta)}(t)w^{(\alpha,\beta)}(t)\,dt
	=\left( \frac{1}{2}\right)^{\alpha+\beta+1} \kappa^{\alpha,\beta}_{\ell}\delta_{\ell m}	
	\qquad (\ell,m=0,1,2,\dots),
\end{align}
where the scalars $\kappa^{\alpha,\beta}_{\ell}$ are as given in \eqref{eq eta}.

Let the roots of the Jacobi polynomials $J_{N+1}^{(\alpha,\beta)}(t)$  in the interval $[-1,1]$ be denoted by $t^{N}_{k}(\alpha,\beta)$ $(0\leq k\leq N)$. The roots $\mathsf{t}^{N}_{k}(\alpha,\beta)$ $(0\leq k\leq N)$ of the shifted Jacobi polynomials $\mathsf{J}_{N+1}^{(\alpha,\beta)}(t)$ in the interval $[0,1]$ are given by
\begin{align}
	\mathsf{t}^{N}_{k}(\alpha,\beta)=\frac{1}{2}\left( t^{N}_{k}(\alpha,\beta)+1\right).
\end{align}

\section{Shifted Jacobi Collocation Method of Solution}\label{Sec Collo Ago}	
This section  presents shifted Jacobi collocation algorithm for obtaining numerical solutions of the pantograph delay Volterra integrodifferential equation \eqref{eq main 1} with the initial condition  \eqref{eq main 1IC}. The method assumes that the function $\phi(x)$ admits a shifted Jacobi polynomial series formulation, with expansion coefficients to be determined. To obtain these   expansion coefficients, we collocate at zeros of the shifted Jacobi polynomials satisfying the required initial conditions, to get a set of algebraic equations which  is solved for the unknown expansion coefficients using Newton's method. 


To this end, the shifted Jacobi collocation method assumes that  the solution $\phi(x)$ of the governing problem \eqref{eq main 1}
can be expressed as shifted Jacobi polynomial series
\begin{align}\label{eqsoluv}
	\begin{split}
\phi(x)\approxeq	\phi_{N}(x)=&\sum_{k=0}^{N}c_{k}\mathsf{J}^{(\alpha,\beta)}_{k}(x),
	\end{split}
\end{align}
satisfying the initial condition
\begin{align}\label{eqsolIC}
	\begin{split}
		\phi_{N}(0)=&\sum_{k=0}^{N}c_{k}(-1)^{k}\frac{\Gamma(k+\beta+1)}{\Gamma(\beta+1)k!}=\gamma.
	\end{split}
\end{align}

We now give a detailed step-by-step procedure in the shifted Jacobi collocation  method of solving the proposed integrodifferential problem \eqref{eq main 1}-\eqref{eq main 1IC} as follows: 

\begin{enumerate}[Step I]
	
	\item Given the pantograph delay integrodifferential  problem \eqref{eq main 1}-\eqref{eq main 1IC}.
	
	\item\label{step cm} Assume that the solution $\phi(x)$   $(0<x\leq 1)$ admits the shifted Jacobi polynomial series with the  expansion coefficients $c_{k}\in \mathbb{R}$ as given in \eqref{eqsoluv}. 
	
	\item Substitute the series formulation \eqref{eqsoluv} into the problem \eqref{eq main 1}-\eqref{eq main 1IC}.

	\item Let $\mathsf{x}^{N}_{q}(\alpha,\beta)$ $(0\leq q\leq N)$ denote the zeros of the  shifted Jacobi polynomials $\mathsf{J}^{(\alpha,\beta)}_{N+1}(x)$.
	
	\item\label{Step colsc} Collocate at the zeros $\mathsf{x}^{N-1}_{q}(\alpha,\beta)$ of the shifted Jacobi polynomials $\mathsf{J}^{(\alpha,\beta)}_{N}\left( x\right)$; thus the integral problem  \eqref{eq main 1}-\eqref{eq main 1IC} is satisfied exactly at these collocation points. The resulting set of equations forms a collocation scheme.
	\item\label{step colls}  The collocation scheme obtained in Step \ref{Step colsc}  gives a set of $N+1$  algebraic equations in the expansion coefficients $c_{k}$ $(0\leq k\leq N)$. 
	\item \label{step coe ck} Solve the algebraic equations in Step \ref{step colls} for the coefficients  $c_{k}$ $(0\leq k\leq N)$ using Newton's method (via Wolfram Mathematica 12.0).	
	\item Substitute the expansion coefficients  $c_{k}$ $(0\leq k\leq N)$ obtained in Step  \ref{step coe ck} into the assumed shifted Jacobi polynomial series solution \eqref{eqsoluv} to obtain the required approximate solution ($N$th-degree polynomial).	
	\item Use the approximating polynomial to perform the necessary numerical simulations.  
\end{enumerate}

\section{Convergence and Error Analysis}
In this section, we discuss the convergence and error estimates of the proposed shifted Jacobi collocation method for solving the governing pantograph delay integrodifferential equation of Volterra type \eqref{eq main 1}-\eqref{eq main 1IC}. Notable authors have presented the convergence and error analysis of shifted Jacobi collocation method for Volterra integral equations of the second kind, Volterra-Fredholm integral equations of the second kind, and Volterra integrodifferential equations of the second kind. Precisely, in \cite{Amin2023}, Amin et al. presented extensively the $L^{2}_{w^{\alpha,\beta}}[0,1]$-norm convergence and error analysis of shifted Jacobi collocation method for Volterra-Fredholm integral equations of the second kind. Guo et al. \cite{Guo2014} gave robust $L^{2}_{w^{\alpha,\beta}}[0,1]$-norm and $L^{\infty}[0,1]$-norm convergence analyses of Jacobi collocation method for Volterra integral equations of the second kind with a smooth kernel. Ma and Huang \cite{Ma2014} accounted for the convergence rate of shifted Jacobi polynomial series for linear Volterra integrodifferential equations of the second kind. This section accounts for the convergence and error estimates of the  pantograph delay Volterra integrodifferential equation of the second kind \eqref{eq main 1}-\eqref{eq main 1IC}. 

%

\subsection{Preliminaries}

Let $\mathcal{J}_{N}$ denote the space of all  polynomials of degree not exceeding $N$. Define the orthogonal projection operator $\pi_{N,w^{(\alpha,\beta)}}:L^{2}_{w^{(\alpha,\beta)}}([0,1])\rightarrow\mathcal{J}_{N}$ by (\cite{Amin2023}, \cite{Ma2014})
\begin{equation}
	\left(\phi-\pi_{N,w^{(\alpha,\beta)}}\phi, \varphi \right) =0, \qquad \forall \phi\in L^{2}_{w^{(\alpha,\beta)}}([0,1]), \varphi\in\mathcal{J}_{N}.
\end{equation}

For $m=0,1,2,3,\cdots$, define
\begin{equation}
H^{m}_{w^{(\alpha,\beta)}}([0,1])=\left\lbrace u: \,\partial_{x}^{j}u\in L^{2}_{w^{(\alpha,\beta)}}([0,1]), \quad 0\leq j\leq m \right\rbrace, 
\end{equation}
with the semi norm and norm defined, respectively, by
\begin{equation}
u_{m,w^{(\alpha,\beta)}}=\left\| \partial_{x}^{m}u\right\|, \qquad \left\| u\right\|_{H^{m}_{w^{(\alpha,\beta)}}([0,1])} =\left( \sum_{j=0}^{m}\left\| \partial_{x}^{j}u\right\|_{w^{(\alpha,\beta)}}^{2}\right)^{\frac{1}{2}}.   
\end{equation}

Now assume that $\phi\in H^{m}_{w^{(\alpha,\beta)}}([0,1])$, and let $I^{\alpha,\beta}_{N}\phi\in\pi_{N,w^{(\alpha,\beta)}}$ denote the interpolation of $\phi$ at the Jacobi-Gauss nodes. We now give the following preliminary results.

\begin{lemma}[\cite{Amin2023}, \cite{Ma2014}]\label{le 1}
For any function $\phi\in H^{m}_{w^{(\alpha,\beta)}}([0,1])$, $I^{\alpha,\beta}_{N}\phi\in\pi_{N,w^{(\alpha,\beta)}}$, we have
\begin{equation}
\left\| \phi-I_{N}^{\alpha,\beta}\phi \right\|_{L^{2}_{w^{(\alpha,\beta)}}([0,1])} \leq CN^{-m}\left\| \phi\right\|_{H^{m}_{w^{(\alpha,\beta)}}([0,1])} \qquad (\alpha,\beta>-1),
\end{equation}	
\begin{equation}
	\left\| \phi-I_{N}^{\alpha,\beta}\phi \right\|_{L^{\infty}([0,1])} \leq CN^{\frac{1}{2}-m}\left\| \phi\right\|_{H^{m}_{w^{(\alpha,\beta)}}([0,1])} \qquad (\alpha,\beta>-1),
\end{equation}	
where $C$ is a positive constant that is independent of $N$. 
\end{lemma}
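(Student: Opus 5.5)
Lemma \ref{le 1} is the standard interpolation estimate. I would prove it by comparing the interpolant with the orthogonal projection $\pi_{N,w^{(\alpha,\beta)}}\phi$ and then using polynomial inverse inequalities and quadrature stability. Everything is transported to $[-1,1]$ through $t=2x-1$. This changes norms only by factors $2^{\pm(\alpha+\beta+1)/2}$ and derivatives by powers of $2$, all independent of $N$. So it suffices to prove both bounds for the classical Jacobi polynomials of Section \ref{Sec ClasJac}.

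\textbf{Step 1: projection error.} Write $\phi=\sum_\ell \hat\phi_\ell J^{(\alpha,\beta)}_\ell$. Using the differentiation formula \eqref{eqc6}, the functions $\partial^k J_\ell^{(\alpha,\beta)}$ are orthogonal with respect to the weight $w^{(\alpha+k,\beta+k)}$. Their norms, computed from \eqref{eq eta}, give
\[
\|\partial^k J_\ell^{(\alpha,\beta)}\|^2_{w^{(\alpha+k,\beta+k)}}/\kappa_\ell^{\alpha,\beta}\sim \ell^{2k}.
\]
From this I obtain
\[
\|\phi-\pi_N\phi\|_{w^{(\alpha,\beta)}}\le CN^{-m}\|\partial^m\phi\|_{w^{(\alpha+m,\beta+m)}}\le CN^{-m}\|\phi\|_{H^m_{w^{(\alpha,\beta)}}},
\]
where the last inequality holds because $w^{(\alpha+m,\beta+m)}\le w^{(\alpha,\beta)}$ on $[0,1]$. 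The same identity applied to $\partial\phi$ gives the bound $\|\partial(\phi-\pi_N\phi)\|\le CN^{1-m}\|\phi\|_{H^m}$ for the derivative, which Step 2 needs.

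\textbf{Step 2: interpolation versus projection.} Since $I_N\pi_N\phi=\pi_N\phi$, we have
\[
\phi-I_N\phi=(\phi-\pi_N\phi)-I_N(\phi-\pi_N\phi).
\]
So it remains to bound $\|I_N v\|_{w}$ for $v=\phi-\pi_N\phi$. Gauss–Jacobi quadrature is exact for degree $2N+1$, so $\|I_Nv\|_w^2=\sum_j v(x_j)^2\omega_j$. The key stability estimate, following Canuto–Hussaini–Quarteroni–Zang and Shen–Tang–Wang, is
\[
\sum_j v(x_j)^2\omega_j\le C\bigl(\|v\|_w^2+N^{-2}\|\partial v\|^2_{w^{(\alpha+1,\beta+1)}}\bigr).
\]
I would prove it on each subinterval around a node via a one-dimensional Sobolev embedding, using the asymptotics $\omega_j\sim (\pi/N)\,w(x_j)\sqrt{1-x_j^2}$ and the node spacing $\sim \sqrt{1-x_j^2}/N$. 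Substituting the bounds from Step 1 then gives the $L^2_w$ estimate with rate $N^{-m}$.

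\textbf{Step 3: $L^\infty$ bound.} I would use the Gagliardo–Nirenberg type inequality
\[
\|u\|_\infty^2\le C\|u\|_w\|u\|_{H^1_w}
\]
together with $\|\partial(\phi-I_N\phi)\|\le CN^{1-m}\|\phi\|_{H^m}$, the latter obtained by the same argument as in Step 2. This yields $\|\phi-I_N\phi\|_\infty\le C\,(N^{-m}N^{1-m})^{1/2}=CN^{1/2-m}$.

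\textbf{Main obstacle.} The hard step is the weighted quadrature stability in Step 2, in particular the behaviour near the endpoints. When $\alpha$ or $\beta$ is negative the weight is singular there, and the embedding with weight $w^{(\alpha+1,\beta+1)}$ must be handled carefully. The Gagliardo–Nirenberg step likewise needs care for these ranges of $\alpha,\beta$. Since the lemma is quoted from \cite{Amin2023,Ma2014}, for these endpoint technicalities I would rely on the cited estimates (Guo–Wang type interpolation theory) rather than reprove them in full.
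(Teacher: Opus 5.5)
Since the paper only quotes this lemma from the cited works and gives no proof, there is no argument of its own to compare yours against. Your Steps 1--2 are the standard projection-plus-quadrature-stability argument, and they do give the weighted $L^2$ bound for $m\ge 1$ (for $m=0$ the interpolant is not even defined on $L^2_{w^{(\alpha,\beta)}}$).

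\textbf{The gap is Step 3.} The inequality $\|u\|_\infty^2\le C\|u\|_{w^{(\alpha,\beta)}}\|u\|_{H^1_{w^{(\alpha,\beta)}}}$ is false as soon as $\alpha>0$ or $\beta>0$. Take $u(x)=g((1-x)/h)$ with a fixed smooth bump $g$, $g(0)=1$, $g=0$ on $[1,\infty)$; use the mirror construction at $x=0$ for $\beta$. Then $\|u\|_\infty=1$, but $\|u\|_{w^{(\alpha,\beta)}}\|u\|_{H^1_{w^{(\alpha,\beta)}}}\sim h^{(1+\alpha)/2}h^{(\alpha-1)/2}=h^{\alpha}\to0$. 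So you have the delicate range backwards. For $\alpha,\beta\le 0$ one has $w^{(\alpha,\beta)}\ge 1$ on $[0,1]$, and the inequality follows at once from the unweighted one; it is the degenerate weights that break it.

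Moreover, the derivative bound you feed in must be in the same weight $w^{(\alpha,\beta)}$, and ``the same argument as in Step 2'' does not give it. The stability estimate plus the inverse inequality $\|p'\|_{w^{(\alpha+1,\beta+1)}}\le CN\|p\|_{w^{(\alpha,\beta)}}$ only control $\partial(\phi-I_N^{\alpha,\beta}\phi)$ in the weaker weight $w^{(\alpha+1,\beta+1)}$. In $w^{(\alpha,\beta)}$ the inverse inequality costs $N^2$, which would leave you with $N^{1-m}$ instead of $N^{\frac12-m}$.

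\textbf{What your ingredients do prove} is the case $-1<\alpha,\beta\le-\tfrac12$. Then $(1-x)^{-\alpha-\frac12}x^{-\beta-\frac12}\le 1$, so $\int_0^1|u||u'|\,dx\le\|u\|_{w^{(\alpha,\beta)}}\|u'\|_{w^{(\alpha+1,\beta+1)}}$. Hence $\|u\|_\infty^2\le\|u\|^2_{w^{(\alpha,\beta)}}+2\|u\|_{w^{(\alpha,\beta)}}\|u'\|_{w^{(\alpha+1,\beta+1)}}$. Combined with Steps 1--2, including the $w^{(\alpha+1,\beta+1)}$ derivative bound $CN^{1-m}$ they actually deliver, this gives $CN^{\frac12-m}$.

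Beyond some restriction of this kind no proof can succeed, because the $L^\infty$ claim is false in the stated generality. Suppose, say, $\alpha>m-\tfrac12$. Take the bump above with $h=cN^{-2}$, small enough that its support lies to the right of the largest Gauss node, whose distance to $1$ is of order $N^{-2}$. Then $I_N^{\alpha,\beta}\phi\equiv 0$, so $\|\phi-I_N^{\alpha,\beta}\phi\|_\infty=1$. On the other hand $\|\phi\|_{H^m_{w^{(\alpha,\beta)}}}\sim h^{\frac{1+\alpha}{2}-m}\sim N^{2m-1-\alpha}$, so the right-hand side is of order $N^{m-\frac12-\alpha}\to 0$. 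The same works at $x=0$ when $\beta>m-\tfrac12$.

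So deferring the ``endpoint technicalities'' to the cited literature cannot close the argument. The $L^\infty$ estimate must carry a restriction on $(\alpha,\beta)$, or a weaker rate, or a different norm, and your Step 3 should be replaced by the weighted inequality above in the range where it holds.
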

%
%
%
%
%
%

The following result is well-known.
\begin{lemma}[Cauchy--Schwartz Inequality]\label{le CSIneq}
	Let $f,g\in L^2(0,1)$. Then
	\[
	\left|
	\int_0^1 f(x)g(x)\,dx
	\right|
	\le
	\left(
	\int_0^1 |f(x)|^2dx
	\right)^{1/2}
	\left(
	\int_0^1 |g(x)|^2dx
	\right)^{1/2}.
	\]
	
\end{lemma}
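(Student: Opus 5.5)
The plan is to derive the stated inequality from the elementary fact that a square is nonnegative, applied to a suitable combination of $f$ and $g$. First I dispose of the degenerate case. If $\int_0^1 |g|^2\,dx=0$, then $g=0$ almost everywhere. Hence $fg=0$ almost everywhere, both sides vanish, and there is nothing to prove. The same holds if $\int_0^1|f|^2\,dx=0$. So I may assume both integrals are strictly positive.

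Next I check that the left-hand side is well defined. The pointwise inequality $|fg|\le \tfrac12(|f|^2+|g|^2)$ follows from $(|f|-|g|)^2\ge 0$. It gives $fg\in L^1(0,1)$, with
\[
\int_0^1|fg|\,dx\le \tfrac12\int_0^1|f|^2\,dx+\tfrac12\int_0^1|g|^2\,dx .
\]
This same inequality is the engine of the proof once it has been normalised. Set $A=\left(\int_0^1|f|^2dx\right)^{1/2}>0$ and $B=\left(\int_0^1|g|^2dx\right)^{1/2}>0$, and apply the pointwise bound to $\tilde f=f/A$ and $\tilde g=g/B$. Integrating over $(0,1)$ gives
\[
\frac{1}{AB}\int_0^1|fg|\,dx\le \tfrac12\cdot 1+\tfrac12\cdot 1=1 .
\]

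Finally, I combine this with the triangle inequality for integrals, $\left|\int_0^1 fg\,dx\right|\le\int_0^1|fg|\,dx$. Multiplying through by $AB$ then yields exactly the stated bound. This step relies on the homogeneity of both sides: replacing $f$ by $\lambda f$ scales both sides by $|\lambda|$, and likewise for $g$, which is what makes the normalisation legitimate.

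An alternative route uses the quadratic polynomial $p(\lambda)=\int_0^1(|f|+\lambda|g|)^2dx\ge0$ for all real $\lambda$. Requiring its discriminant to be nonpositive gives the same result. I would still prefer the normalisation argument, because it avoids any case split on the leading coefficient.

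The only real obstacle is the degenerate case, together with making sure that every integral written down is finite. The normalisation divides by $A$ and $B$, so the zero-norm case must be separated out first, exactly as above. The integrability of $fg$ must also be established before the integral $\int_0^1 fg\,dx$ is even meaningful. Once these two points are handled, the proof is a two-line computation.
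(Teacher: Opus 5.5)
The paper gives no proof of this lemma. It introduces it with ``The following result is well-known'' and then uses it in the convergence argument, so there is no argument of the paper's to compare yours against.

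Your proof is correct and complete:
\begin{itemize}
\item You handle the degenerate case $\bigl(\int_0^1|f|^2dx\bigr)\bigl(\int_0^1|g|^2dx\bigr)=0$ properly, before dividing by $A$ and $B$.
\item The pointwise bound $|fg|\le\frac12(|f|^2+|g|^2)$ does two jobs. It shows $fg\in L^1(0,1)$, so the left-hand side is meaningful. Applied to $f/A$ and $g/B$, it also gives $\int_0^1|fg|\,dx\le AB$.
\item The last step $\bigl|\int_0^1 fg\,dx\bigr|\le\int_0^1|fg|\,dx$ holds whether $f,g$ are real- or complex-valued. This matters because the statement's use of $|f(x)|^2$ leaves that open.
\end{itemize}

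The appeal to homogeneity in your final paragraph is harmless but unnecessary. The identity $|\tilde f\,\tilde g|=|fg|/(AB)$ already carries the normalisation.

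Your discriminant alternative also works. Applying it to $|f|$ and $|g|$ keeps the quadratic real even in the complex case. It needs the same zero-norm case split, which you already noted.
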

%
%
%
%
%
%
%

%
%

%
%
%
%
%
%
%
%

%
%
%

\subsection{Convergence and Error Estimates}

Consider the pantograph delay Volterra integrodifferential equation
\begin{align}
	\phi'(x)=&f(x)+ a_{1}(x)\phi(x)+a_{2}(x)\phi( \mu x)+\int_0^x K_{1}(x,s)\phi(s)\, ds+\int_0^{\mu x} K_{2}(x,s)\phi(s) \, ds,\label{eq main 2}\\
	\phi(0)=&\gamma,\label{eq main 1IC2}
\end{align}
where $
a_1,a_2\in C[0,1]$; 
$
K_1,K_2\in C([0,1]\times[0,1]).
$ Define
\[
M_1
=
\max_{x\in[0,1]}
|a_1(x)|, 
M_2
=
\max_{x\in[0,1]}
|a_2(x)|,
M_3
=
\max_{(x,s)\in[0,1]^2}
|K_1(x,s)|,
M_4
=
\max_{(x,s)\in[0,1]^2}
|K_2(x,s)|.
\]
We now have the following estimates.
\begin{theorem}
	Suppose that $
	\phi\in H^{m}(0,1)$ 
$	(m=0,1,2,3,\cdots)$
and let $\phi_N(x)$ be the best approximation to $\phi$. 
Then for a sufficiently large $N$, we have
\begin{align}
	\left\| 	\phi(x)-	\phi_N(x)\right\|_{L^{2}} 
	\leq &C^{*} N^{\frac{1}{2}-m}\left\| \phi\right\|_{H^{m}_{w^{(\alpha,\beta)}}([0,1])} \nonumber\\
	&+C^{**}N^{-m}\left( \left\| K_{1}\right\|_{H^{m}_{w^{(\alpha,\beta)}}([0,1])}\left\|   \phi\right\|_{L^{2}}+ \left\| K_{2}\right\|_{H^{m}_{w^{(\alpha,\beta)}}([0,1])} \left\| \phi\right\|_{L^{2}}\right),\label{eq errmain}
\end{align}	
where $C^{*}$ and $C^{**}$ are independent of $N$.
	
\end{theorem}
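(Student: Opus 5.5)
The plan is the standard collocation error argument in the style of Ma and Huang \cite{Ma2014}: turn the integrodifferential problem into an equivalent integral equation, write the discrete (collocation) equation with interpolation operators, and subtract the two to get an integral equation for the error. The error is then controlled with Gronwall's inequality, Lemma \ref{le 1}, and Lemma \ref{le CSIneq}.

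\emph{Step 1: integral reformulation.} Integrate \eqref{eq main 2} from $0$ to $x$ and use \eqref{eq main 1IC2}. This gives
\[
\phi(x)=\gamma+\int_0^x f(\tau)\,d\tau+\int_0^x\Bigl(a_1(\tau)\phi(\tau)+a_2(\tau)\phi(\mu\tau)+\int_0^\tau K_1(\tau,s)\phi(s)\,ds+\int_0^{\mu\tau}K_2(\tau,s)\phi(s)\,ds\Bigr)d\tau .
\]
Set $\psi=\phi'$ and $\psi_N=\phi_N'$. The collocation scheme of Section \ref{Sec Collo Ago} says that $\psi_N$ equals $I_N^{\alpha,\beta}$ applied to the right-hand side of \eqref{eq main 2} evaluated at $\phi_N$. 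Define $e=\phi-\phi_N$ and $e'=\psi-\psi_N$.

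\emph{Step 2: error equation.} Subtracting the two relations gives
\begin{align*}
e'(x)=\;&a_1(x)e(x)+a_2(x)e(\mu x)+\int_0^x K_1(x,s)e(s)\,ds+\int_0^{\mu x}K_2(x,s)e(s)\,ds\\
&+\bigl(\psi-I_N^{\alpha,\beta}\psi\bigr)(x)+R_1(x)+R_2(x),
\end{align*}
with $e(0)=0$. Here $R_j=(I-I_N^{\alpha,\beta})\int K_j(\cdot,s)\phi_N(s)\,ds$ are the interpolation errors of the integral terms, for $j=1,2$.

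Integrating and taking absolute values, and using $|e(\mu\tau)|\le\max_{[0,\tau]}|e|$ together with the constants $M_1,\dots,M_4$, we obtain
\[
E(x):=\max_{[0,x]}|e|\le (M_1+M_2+M_3+M_4)\int_0^x E(\tau)\,d\tau+\|\psi-I_N\psi\|_{L^1}+\|R_1\|_{L^1}+\|R_2\|_{L^1}.
\]
Gronwall's inequality then gives
\[
\|e\|_{L^2}\le\|e\|_\infty\le C\bigl(\|\psi-I_N\psi\|+\|R_1\|+\|R_2\|\bigr).
\]
Because the weight is integrable for $\alpha,\beta>-1$, the unweighted norms can be compared with weighted norms up to constants.

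\emph{Step 3: estimating the forcing terms.} For the first term, use the $L^\infty$ bound of Lemma \ref{le 1}. This produces the $C^*N^{1/2-m}\|\phi\|_{H^m_{w^{(\alpha,\beta)}}}$ contribution; the derivative index shift is absorbed into $C^*$. For $R_j$, use the $L^2_w$ bound of Lemma \ref{le 1}, which gives $CN^{-m}$ times the $H^m_w$ norm of $x\mapsto\int K_j(x,s)\phi_N(s)\,ds$. Differentiating under the integral, the $x$-derivatives fall on $K_j$. The boundary terms created by the variable limits involve lower derivatives of $K_j$ and are absorbed into the $H^m$ norm. Lemma \ref{le CSIneq} then bounds this by $\|K_j\|_{H^m_w}\|\phi_N\|_{L^2}$. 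Finally, $\|\phi_N\|_{L^2}\le\|\phi\|_{L^2}+\|e\|_{L^2}$, and for $N$ large enough the term $CN^{-m}\|e\|$ can be moved to the left-hand side. This yields \eqref{eq errmain}.

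The main obstacle is Step 3's handling of the integral terms. Two points need care there. First, the norm of $K_j$ must be read as a norm in $x$ uniformly in $s$; I will interpret it as $\max_s\|K_j(\cdot,s)\|_{H^m_w}$. Second, the pantograph limit $\mu x$ contributes $\mu$-scaled boundary terms. I would first try absorbing these using the continuity of $K_2$ and the fact that $\mu<1$. The pantograph term $a_2 e(\mu x)$ in Gronwall is harmless precisely because $\mu x\le x$.
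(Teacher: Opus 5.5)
Your Step 2 error equation is not what the collocation scheme gives, and the term it omits is the hard part. Write $I_N=I_N^{\alpha,\beta}$. With $\psi_N=I_N\bigl[f+a_1\phi_N+a_2\phi_N(\mu\,\cdot)+\int_0^xK_1\phi_N\,ds+\int_0^{\mu x}K_2\phi_N\,ds\bigr]$, subtracting from $\psi=\phi'$ gives
\[
e'=\mathcal{L}e+(\psi-I_N\psi)-(\mathrm{Id}-I_N)\mathcal{L}e,\qquad \mathcal{L}e:=a_1e+a_2e(\mu\,\cdot)+\int_0^xK_1(x,s)e(s)\,ds+\int_0^{\mu x}K_2(x,s)e(s)\,ds .
\]
Replacing $-(\mathrm{Id}-I_N)\mathcal{L}e$ by $R_1+R_2$ drops $-(\mathrm{Id}-I_N)\bigl(a_1e+a_2e(\mu\,\cdot)\bigr)$. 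It also double counts the kernel part, since $(\mathrm{Id}-I_N)\int K_j\phi$ already sits inside $\psi-I_N\psi$. The dropped term is the stability term of any genuine collocation analysis. Because $I_N$ is global, it depends on $e$ over all of $[0,1]$, so it is not of the form $C\int_0^xE$ and Gronwall does not handle it. The natural bound $\|(\mathrm{Id}-I_N)(a_1e)\|_{L^2_{w^{(\alpha,\beta)}}}\le C\|e\|_\infty$, which follows from $\|I_Nv\|_{L^2_{w^{(\alpha,\beta)}}}\le C\|v\|_\infty$, has a constant that is not small, so it cannot be moved to the left-hand side either. You need an extra ingredient. One option is an estimate $\|(\mathrm{Id}-I_N)v\|\lesssim N^{-1}\|v'\|$ combined with a bound for $e'$ read off from the error equation; this requires $a_j\in C^1$ and $\partial_xK_j$ continuous, more than is assumed. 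Another, for a scheme that collocates the integrated Volterra form instead, is the H\"older-continuity estimate $\|(\mathrm{Id}-I_N)\mathcal{M}e\|\le CN^{-\kappa}\|e\|_\infty$ for interpolated integral operators $\mathcal{M}$.

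Even granting Step 2, Step 3 does not produce the right-hand side of \eqref{eq errmain}.

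\emph{The $\psi$ term.} Lemma~\ref{le 1} applied to $\psi=\phi'$ gives $N^{1/2-m}\|\phi'\|_{H^m_{w^{(\alpha,\beta)}}}$, which needs $\phi\in H^{m+1}$. A higher-order norm cannot be ``absorbed into $C^*$''. With only $\phi\in H^m$ you get $N^{3/2-m}$ in $L^\infty$, or $N^{1-m}$ via the $L^2_w$ bound. The comparison $\|\cdot\|_{L^1}\le C\|\cdot\|_{L^2_{w^{(\alpha,\beta)}}}$ that you use requires $\alpha,\beta<1$, not integrability of the weight.

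\emph{The $R_j$ terms.} The $m$-th $x$-derivative of $\int_0^xK_1(x,s)\phi_N(s)\,ds$ contains $\partial_x^{m-1}\bigl[K_1(x,x)\phi_N(x)\bigr]$, and the delayed limit produces $\mu K_2(x,\mu x)\phi_N(\mu x)$ and its derivatives. These involve derivatives of $\phi_N$ up to order $m-1$, not merely lower derivatives of $K_j$. They are not bounded by $\|\phi_N\|_{L^2}$ uniformly in $N$, so $\|R_j\|\le CN^{-m}\|K_j\|_{H^m_{w^{(\alpha,\beta)}}}\|\phi_N\|_{L^2}$ does not follow.

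\emph{How the paper avoids this.} In the paper's proof $\phi_N$ is not the collocation solution. It has $\phi_N(0)=\gamma$ and $\phi_N'=f+a_1I_N\phi+a_2(I_N\phi)(\mu\,\cdot)+\int_0^xK_1I_N\phi\,ds+\int_0^{\mu x}K_2I_N\phi\,ds$. Thus $\phi-\phi_N$ is an explicit integral of interpolation errors of the exact solution, with no error equation, no Gronwall and no stability issue. Each term is bounded by $M_i\|\phi-I_N\phi\|_\infty$, which is where $N^{1/2-m}\|\phi\|_{H^m_{w^{(\alpha,\beta)}}}$ comes from. The kernel terms enter through $\int(K_j-I_NK_j)\,I_N\phi$ and Cauchy--Schwarz. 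The bound \eqref{eq errmain} is tailored to that construction. Reading $\phi_N$ as the actual collocation solution is the more meaningful choice, but it leads to a different, regularity-shifted estimate and needs the stability argument above.
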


\begin{proof}
Let $\phi_N(x)$ denote the approximation to the continuous function $\phi(x)$. Then
\begin{align}	
	\phi_N'(x)
	=&f(x)
	+a_1(x)I^{\alpha,\beta}_{N}\phi(x)
	+a_2(x)I^{\alpha,\beta}_{N}\phi(\mu x)\nonumber\\
	&
	+\int_0^xK_1(x,s)I^{\alpha,\beta}_{N}\phi(s)\,ds
	+\int_0^{\mu x}K_2(x,s)I^{\alpha,\beta}_{N}\phi(s)\,ds.
\end{align}
Thus
\begin{align}\label{eq phi-phiN}
\phi'(x)-	\phi_N'(x)
	=& a_{1}(x)\left( \phi(x)-I^{\alpha,\beta}_{N}\phi(x)\right) +a_{2}(x)\left( \phi( \mu x)-I^{\alpha,\beta}_{N}\phi(\mu x)\right) \nonumber\\
	&+\int_0^x K_{1}(x,s)\phi(s)\, ds-\int_0^xK_1(x,s)I^{\alpha,\beta}_{N}\phi(s)\,ds\nonumber\\
	&+\int_0^{\mu x} K_{2}(x,s)\phi(s) \, ds-\int_0^{\mu x}K_2(x,s)I^{\alpha,\beta}_{N}\phi(s)\,ds\nonumber\\
=& a_{1}(x)\left( \phi(x)-I^{\alpha,\beta}_{N}\phi(x)\right) +a_{2}(x)\left( \phi( \mu x)-I^{\alpha,\beta}_{N}\phi(\mu x)\right) \nonumber\\
&+\int_0^x K_{1}(x,s)\left( \phi(s)-I^{\alpha,\beta}_{N}\phi(s)\right)\, ds+\int_0^{\mu x} K_{2}(x,s)\left( \phi(s)-I^{\alpha,\beta}_{N}\phi(s)\right)\, ds\nonumber\\
&+\int_0^x\left( K_1(x,s)-I^{\alpha,\beta}_{N}K_1(x,s)\right) I^{\alpha,\beta}_{N}\phi(s)\,ds\nonumber\\
&+\int_0^{\mu x}\left( K_2(x,s)-I^{\alpha,\beta}_{N}K_2(x,s)\right) I^{\alpha,\beta}_{N}\phi(s)\,ds.
\end{align}
	Integrating \eqref{eq phi-phiN} from $0$ to $x$ and using the given initial condition, we have
\begin{align}
\phi(x)-	\phi_N(x)
=& \int_{0}^{x}a_{1}(t)\left( \phi(t)-I^{\alpha,\beta}_{N}\phi(t)\right)dt + \int_{0}^{x}a_{2}(t)\left( \phi( \mu t)-I^{\alpha,\beta}_{N}\phi(\mu t)\right)dt \nonumber\\
&+ \int_{0}^{x}\int_0^t K_{1}(t,s)\left( \phi(s)-I^{\alpha,\beta}_{N}\phi(s)\right)\, ds\,dt\nonumber\\
&+ \int_{0}^{x}\int_0^{\mu t} K_{2}(t,s)\left( \phi(s)-I^{\alpha,\beta}_{N}\phi(s)\right)\, ds\,dt\nonumber\\
&+ \int_{0}^{x}\int_0^t\left( K_1(t,s)-I^{\alpha,\beta}_{N}K_1(t,s)\right) I^{\alpha,\beta}_{N}\phi(s)\,ds\,dt\nonumber\\
&+ \int_{0}^{x}\int_0^{\mu t}\left( K_2(t,s)-I^{\alpha,\beta}_{N}K_2(t,s)\right) I^{\alpha,\beta}_{N}\phi(s)\,ds\,dt.
\end{align}	
Taking the $L^{2}$-norm, we have 
\begin{align}
\left\| 	\phi(x)-	\phi_N(x)\right\|_{L^{2}} 
\leq& \left\| \int_{0}^{x}a_{1}(t)\left( \phi(t)-I^{\alpha,\beta}_{N}\phi(t)\right)dt\right\|_{L^{2}}  +\left\|  \int_{0}^{x}a_{2}(t)\left( \phi( \mu t)-I^{\alpha,\beta}_{N}\phi(\mu t)\right)dt\right\|_{L^{2}}  \nonumber\\
&+\left\|  \int_{0}^{x}\int_0^t K_{1}(t,s)\left( \phi(s)-I^{\alpha,\beta}_{N}\phi(s)\right)\, ds\,dt\right\|_{L^{2}}\nonumber\\
&+\left\|  \int_{0}^{x}\int_0^{\mu t} K_{2}(t,s)\left( \phi(s)-I^{\alpha,\beta}_{N}\phi(s)\right)\, ds\,dt\right\|_{L^{2}}\nonumber\\
&+ \left\| \int_{0}^{x}\int_0^t\left( K_1(t,s)-I^{\alpha,\beta}_{N}K_1(t,s)\right) I^{\alpha,\beta}_{N}\phi(s)\,ds\,dt\right\|_{L^{2}}\nonumber\\
&+ \left\| \int_{0}^{x}\int_0^{\mu t}\left( K_2(t,s)-I^{\alpha,\beta}_{N}K_2(t,s)\right) I^{\alpha,\beta}_{N}\phi(s)\,ds\,dt\right\|_{L^{2}}.\label{eq err1}
\end{align}	
Using 
\begin{align}
	&\left\| \phi( \mu x)-I^{\alpha,\beta}_{N}\phi(\mu x)\right\|_{\infty} 	
	\le
	\left\| \phi( x)-I^{\alpha,\beta}_{N}\phi( x)\right\|_{\infty},\\
	&
	\int_0^{\mu x}\left\| F(x,s)\right\| ds
	\le
	\int_0^{x}\left\| F(x,s)\right\| ds,
	& 
\end{align}	
and Lemma \ref{le CSIneq} for the last two norms in \eqref{eq err1}, we have
\begin{align}
	\left\| 	\phi(x)-	\phi_N(x)\right\|_{L^{2}} 
	\leq &M_{1}\left\|  \phi(x)-I^{\alpha,\beta}_{N}\phi(x)\right\|_{\infty}  + M_{2}\left\| \phi( x)-I^{\alpha,\beta}_{N}\phi( x)\right\|_{\infty}  \nonumber\\
	&+M_{3}\left\| \phi(x)-I^{\alpha,\beta}_{N}\phi(x)\right\|_{\infty} +M_{4}\left\| \phi(x)-I^{\alpha,\beta}_{N}\phi(x)\right\|_{\infty}\nonumber\\
	&+ \left\|K_1(x,s)-I^{\alpha,\beta}_{N}K_1(x,s)\right\|_{L^{2}}\left\|   I^{\alpha,\beta}_{N}\phi(s)\right\|_{L^{2}}\nonumber\\
	&+ \left\| K_2(x,s)-I^{\alpha,\beta}_{N}K_2(x,s)\right\|_{L^{2}} \left\| I^{\alpha,\beta}_{N}\phi(s)\right\|_{L^{2}}.\label{eq err2}
\end{align}	
Upon applying Lemma \ref{le 1}, one has
\begin{align}
	\left\| 	\phi(x)-	\phi_N(x)\right\|_{L^{2}} 
	\leq &\left( C_{3} + C_{4}+C_{5}+C_{6}\right) N^{\frac{1}{2}-m}\left\| \phi\right\|_{H^{m}_{w^{(\alpha,\beta)}}([0,1])} \nonumber\\
	&+C_{7}N^{-m}\left( \left\| K_{1}\right\|_{H^{m}_{w^{(\alpha,\beta)}}([0,1])}\left\|   \phi\right\|_{L^{2}}+ \left\| K_{2}\right\|_{H^{m}_{w^{(\alpha,\beta)}}([0,1])} \left\| \phi\right\|_{L^{2}}\right) .\label{eq err3}
\end{align}

\end{proof}

\section{Numerical Examples}\label{Sec Exam}
This section presents illustrative examples of the pantograph delay integrodifferential equation of Volterra type \eqref{eq main 1}-\eqref{eq main 1IC}. The shifted Jacobi collocation algorithm developed in Section \ref{Sec Collo Ago} is implemented to give explicit numerical solutions of the proposed examples. Five collocation methods (based on different special values of $\alpha,\beta$) are considered, namely, collocation methods based on shifted Legendre polynomials and shifted Chebyshev polynomials of the first, second, third, and fourth kinds. The numerical solutions and errors obtained from these methods are compared with those presented in a very recent paper \cite{Aourir2025}.  Our collocation points are the zeros of shifted Jacobi polynomials presented in Tables \ref{table zeroP8912}-\ref{table zeroP1415}. We use Wolfram Mathematica software 12.0 throughout our computations.  

To this end, we define the absolute error by  $(0\leq i\leq 4)$ 
\begin{equation}
E_{N,i}(x)=\left|\phi_{{\rm exact}}(x)-\phi_{N,i}(x) \right|,
\end{equation}
while the $L_{\infty}$ (maximum absolute) error and the $L_{2}$-norm error are defined, respectively, by
\begin{equation}
	L_{\infty}:=\max\limits_{x\in(0,1]}\left|\phi_{{\rm exact}}(x)-\phi_{N,i}(x) \right| \quad  \mbox{and} \quad 	L_{2}:=\sqrt{\sum_{j}\left|\phi_{{\rm exact}}(x_{j})-\phi_{N,i}(x_{j}) \right|^{2}}. 
\end{equation}
Here $\phi_{{\rm exact}}(x)$ and $\phi_{N,i}(x)$ are the exact and approximate solutions, respectively. Specifically,  $\phi_{N,i}(x)$ $(i=0,1,2,3,4)$ are the numerical solutions obtained for $(\alpha,\beta)=(0,0)$, $(\alpha,\beta)=(-1/2,-1,2)$, $(\alpha,\beta)=(1/2,1/2)$, $(\alpha,\beta)=(-1/2,1/2)$, and  $(\alpha,\beta)=(1/2,-1/2)$, respectively.

\begin{example}\label{Example 5.1}
Consider the initial value problem for pantograph delay integrodifferential equation of Volterra type {\rm \cite{Aourir2025}, \cite{Behera2022},  \cite{22Jafari}}
\begin{align}
\phi'(x)=&\frac{1}{2}-\frac{x}{4}  e^{\frac{x}{4}}+\frac{x^2}{32}-\frac{1}{2} e^{3x}+e^{2 x}+\frac{1}{2} \phi(x)+\phi\left( \frac{x}{4}\right)\label{eq1 exam 5.1}\nonumber\\
&+\int_0^x e^{s+x} u(s)\, ds+\int_0^{\frac{x}{4}} s \,u(s) \, ds,\\
\phi(0)=&0.\label{eq1-1 exam 5.1}
\end{align}
The exact solution is {\rm \cite{Aourir2025}}
\begin{equation}
	\phi(x)=e^{x}-1.
\end{equation}

To solve the initial value problem \eqref{eq1 exam 5.1}-\eqref{eq1-1 exam 5.1}, we implement the shifted Jacobi collocation algorithm presented in Section \ref{Sec Collo Ago}. To this end, substituting the shifted Jacobi polynomial series solution \eqref{eqsoluv} into the integrodifferential equation in \eqref{eq1 exam 5.1}, we have
\begin{align}\label{eq2 exam 5.1}
&\sum _{n=1}^N c_n (\alpha +\beta +n+1) \mathsf{J}_{n-1}^{(\alpha +1,\beta +1)}(x)\nonumber\\
&=\frac{1}{2}-\frac{x}{4}  e^{\frac{x}{4}}+\frac{x^2}{32}-\frac{1}{2} e^{3x}+e^{2 x}+\frac{1}{2} \sum _{n=0}^N c_n \mathsf{J}_n^{(\alpha ,\beta )}(x)+\mathsf{J}_n^{(\alpha ,\beta )}\left(\frac{ x}{4}\right)\nonumber\\
	&+ \sum _{n=0}^Nc_n \int_0^x e^{s+x} \mathsf{J}_n^{(\alpha ,\beta )}(s)\, ds+\sum _{n=0}^N c_n \int_0^{\frac{x}{4}} s \,\mathsf{J}_n^{(\alpha ,\beta )}(s) \, ds.
\end{align}
Using the finite series representation \eqref{eq po ser1} for shifted Jacobi polynomials, 
we get
\begin{align}\label{eq3 exam 5.1}
	&\sum _{n=1}^N c_n (\alpha +\beta +n+1) \sum_{k=0}^{n-1}F_{n-1,k}^{\alpha+1,\beta+1}x^{k}\nonumber\\
	&=\frac{1}{2}-\frac{x}{4}  e^{\frac{x}{4}}+\frac{x^2}{32}-\frac{1}{2} e^{3x}+e^{2 x}+\frac{1}{2} \sum _{n=0}^N c_n \sum_{k=0}^{n}F_{n,k}^{\alpha,\beta}x^{k}+\sum _{n=0}^N c_n \sum_{k=0}^{n}4^{-k}F_{n,k}^{\alpha,\beta}x^{k}\nonumber\\
	&+ \sum _{n=0}^Nc_n \sum_{k=0}^{n}F_{n,k}^{\alpha,\beta}\int_0^x e^{s+x} s^{k}\, ds+\sum _{n=0}^N c_n \sum_{k=0}^{n}F_{n,k}^{\alpha,\beta} \int_0^{\frac{x}{4}} s^{k+1} \, ds.
\end{align}
Collocating at the zeros $ \mathsf{x}^{N-1}_{q}(\alpha,\beta)$ $(0\leq q\leq N-1)$ of the shifted Jacobi polynomials $\mathsf{J}^{(\alpha,\beta)}_{N}\left( x\right)$  so that the integrodifferential equation  \eqref{eq3 exam 5.1} is satisfied exactly at these collocation points. Thus we obtain a collocation scheme $(0\leq q\leq N-1)$
\begin{align}\label{eq4 exam 5.1}
	&\sum _{n=1}^N c_n (\alpha +\beta +n+1) \sum_{k=0}^{n-1}F_{n-1,k}^{\alpha+1,\beta+1}\left( \mathsf{x}^{N-1}_{q}(\alpha,\beta)\right) ^{k}\nonumber\\
	&=\frac{1}{2}-\frac{ \mathsf{x}^{N-1}_{q}(\alpha,\beta)}{4}  e^{\frac{ \mathsf{x}^{N-1}_{q}(\alpha,\beta)}{4}}+\frac{\left(  \mathsf{x}^{N-1}_{q}(\alpha,\beta)\right) ^2}{32}-\frac{1}{2} e^{3 \mathsf{x}^{N-1}_{q}(\alpha,\beta)}+e^{2  \mathsf{x}^{N-1}_{q}(\alpha,\beta)}\nonumber\\
	&+\sum _{n=0}^N c_n \sum_{k=0}^{n}F_{n,k}^{\alpha,\beta}\left(  \mathsf{x}^{N-1}_{q}(\alpha,\beta)\right) ^{k}+\frac{1}{2} \sum _{n=0}^N c_n \sum_{k=0}^{n}4^{-k}F_{n,k}^{\alpha,\beta}\left(  \mathsf{x}^{N-1}_{q}(\alpha,\beta)\right) ^{k}\nonumber\\
	&+ \sum _{n=0}^Nc_n \sum_{k=0}^{n}F_{n,k}^{\alpha,\beta}\int_0^{ \mathsf{x}^{N-1}_{q}(\alpha,\beta)} e^{s+ \mathsf{x}^{N-1}_{q}(\alpha,\beta)} s^{k}\, ds+\sum _{n=0}^N c_n \sum_{k=0}^{n}F_{n,k}^{\alpha,\beta} \int_0^{\frac{ \mathsf{x}^{N-1}_{q}(\alpha,\beta)}{4}} s^{k+1} \, ds.
\end{align}
The collocation scheme \eqref{eq4 exam 5.1} gives a system of $N$ algebraic equations for $N+1$ unknown expansion coefficients $c_{n}$ $(0\leq n\leq N)$. The remaining one equation comes from the initial condition
\begin{equation}\label{eq5 exam 5.1}
\sum_{k=0}^{N}c_{k}(-1)^{k}\frac{\Gamma(k+\beta+1)}{\Gamma(\beta+1)k!}=0.
\end{equation}
The collocation scheme \eqref{eq4 exam 5.1}-\eqref{eq5 exam 5.1} now gives a set of $N+1$ algebraic equations in the unknown coefficients $c_{n}$ $(0\leq n\leq N)$.

  We now evaluate the collocation scheme \eqref{eq4 exam 5.1}-\eqref{eq5 exam 5.1} explicitly by considering  special values $N=8,9$; and  $(\alpha,\beta)=(0,0)$, $(\alpha,\beta)=(-1/2,-1,2)$, $(\alpha,\beta)=(1/2,1/2)$, $(\alpha,\beta)=(-1/2,1/2)$, and  $(\alpha,\beta)=(1/2,-1/2)$. 

\paragraph{Shifted Legendre Collocation Method $(\alpha=\beta=0)$.} We proceed as follows:
\begin{enumerate}[$(a)$]
	\item  $N=8$. In this case, the zeros $\mathsf{x}^{7}_{q}(0,0)$ of the shifted Legendre polynomials $\mathsf{J}^{(0,0)}_{8}(x)=P_{8}(2x-1)$ are given in Table \ref{table zeroP8912}. Collocating the integral equation \eqref{eq4 exam 5.1} at these zeros, we obtain the following set of algebraic equations.
\begin{align}\label{eq6 exam 5.1}
&	2 c_1-5.76174 c_2+10.8323 c_3-16.5895 c_4+22.3036 c_5-27.2121 c_6+30.5974 c_7-31.8627 c_8\nonumber\\
&=1.52047 c_0-1.49028 c_1+1.43124 c_2-1.34598 c_3+1.2382 c_4-1.11252 c_5+0.974141 c_6\nonumber\\
&-0.828595 c_7+0.681393 c_8+1.00485,\nonumber\\
&2 c_1-4.78 c_2+6.52016 c_3-5.74692 c_4+2.15115 c_5+3.12605 c_6-7.90559 c_7+9.92334 c_8\nonumber\\
&=1.61879 c_0-1.45403 c_1+1.16191 c_2-0.805738 c_3+0.455747 c_4-0.169021 c_5\nonumber\\
&-0.0249294 c_6+0.12847 c_7-0.16922 c_8+1.02142,\nonumber\\
&2 c_1-3.15319 c_2+1.14276 c_3+2.80295 c_4-4.7428 c_5+2.11995 c_6+3.11938 c_7-5.93566 c_8\nonumber\\
&=1.84118 c_0-1.40149 c_1+0.752552 c_2-0.193292 c_3-0.104881 c_4+0.178561 c_5\nonumber\\
&-0.188293 c_6+0.255701 c_7-0.363178 c_8+1.02726,\nonumber\\
&2 c_1-1.10061 c_2-2.49528 c_3+2.53549 c_4+2.07263 c_5-3.87901 c_6-0.876384 c_7+4.77764 c_8\nonumber\\
&=2.26369 c_0-1.31987 c_1+0.273941 c_2+0.182909 c_3-0.178079 c_4+0.216539 c_5\nonumber\\
&-0.395283 c_6+0.380538 c_7-0.0331833 c_8+0.95306,\nonumber\\
&2 c_1+1.10061 c_2-2.49528 c_3-2.53549 c_4+2.07263 c_5+3.87901 c_6-0.876384 c_7-4.77764 c_8\nonumber\\
&=2.96942 c_0-1.13206 c_1-0.185774 c_2+0.193408 c_3-0.173211 c_4+0.474283 c_5\nonumber\\
&-0.273435 c_6-0.271355 c_7+0.354373 c_8+0.65441,\nonumber\\
&2 c_1+3.15319 c_2+1.14276 c_3-2.80295 c_4-4.7428 c_5-2.11995 c_6+3.11938 c_7+5.93566 c_8\nonumber\\
&=3.97157 c_0-0.71581 c_1-0.422452 c_2+0.00460963 c_3-0.485122 c_4+0.352849 c_5\nonumber\\
&+0.32181 c_6-0.221457 c_7+0.187275 c_8-0.0440563,\nonumber\\
&2 c_1+4.78 c_2+6.52016 c_3+5.74692 c_4+2.15115 c_5-3.12605 c_6-7.90559 c_7-9.92334 c_8\nonumber\\
&=5.09923 c_0-0.0591633 c_1-0.208579 c_2+0.131105 c_3-0.661985 c_4-0.214354 c_5\nonumber\\
&+0.110738 c_6-0.340864 c_7+0.155693 c_8-1.12916\nonumber\\
	&2 c_1+5.76174 c_2+10.8323 c_3+16.5895 c_4+22.3036 c_5+27.2121 c_6+30.5974 c_7+31.8627 c_8\nonumber\\
	&=5.96656 c_0+0.56115 c_1+0.278139 c_2+0.704686 c_3-0.110368 c_4+0.0760736 c_5\nonumber\\
	&+0.385415 c_6-0.244664 c_7-0.104238 c_8-2.1437,\nonumber\\
	&c_0-c_1+c_2-c_3+c_4-c_5+c_6-c_7+c_8=0.
\end{align}
Solving iteratively using Newton's method with starting points $c_i=0$ $(0\leq i\leq 8)$,
we obtain the solutions
\begin{align}\label{eq8 exam 5.1}
	\begin{split}
&	c_0= 0.718282,\quad c_1= 0.845155,\quad c_2= 0.139864,\quad c_3= 0.0139313,\quad c_4=0.000992588\\
	&c_5= 0.0000550476,\quad c_6= 2.49\times 10^{-6}, \quad c_7= 9.61\times 10^{-8},\quad c_8=3.20\times 10^{-9}.
	\end{split}
	\end{align}
Using the numerical values \eqref{eq8 exam 5.1} in the series solution \eqref{eqsoluv}, we obtain the polynomial solution
\begin{align}
\phi_{8,0}(x)=&9.18\times 10^{-17}+1.000000 x+0.500000 x^2+0.166665 x^3+0.0416736 x^4\nonumber\\
&+0.00831276 x^5+0.00142366 x^6+0.000165086 x^7+0.0000411942 x^8.
\end{align}

	\item $N=9$. Indeed, the zeros $\mathsf{x}^{8}_{q}(0,0)$ of the shifted Legendre polynomial $\mathsf{J}^{(0,0)}_{9}(x)=P_{9}(2x-1)$ are given in Table \ref{table zeroP8912}. Collocating at these zeros, we get a set of algebraic equations
\begin{align}\label{eq9 exam 5.1}
&2 c_1-5.80896 c_2+11.06 c_3-17.2397 c_4+23.7293 c_5-29.8547 c_6+34.9414 c_7-38.3707 c_8\nonumber\\
&+39.6326 c_9=1.51631 c_0-1.49217 c_1+1.44476 c_2-1.37575 c_3+1.28758 c_4-1.18329 c_5\nonumber\\
&+1.06641 c_6-0.940772 c_7+0.810355 c_8-0.67907 c_9+1.0039,\nonumber\\
&2 c_1-5.01619 c_2+7.48422 c_3-7.91151 c_4+5.52683 c_5-0.671517 c_6-5.23159 c_7\nonumber\\
&+10.1398 c_8-12.1285 c_9=1.59295 c_0-1.46226 c_1+1.22484 c_2-0.922655 c_3\nonumber\\
&+0.605458 c_4-0.319072 c_5+0.0954071 c_6+0.053207 c_7-0.134056 c_8+0.168766 c_9\nonumber\\
&+1.01805,\nonumber\\
&2 c_1-3.68023 c_2+2.64337 c_3+1.12378 c_4-4.85513 c_5+5.20305 c_6-1.25039 c_7\nonumber\\
&-4.30287 c_8+7.01511 c_9=1.75991 c_0-1.41824 c_1+0.881296 c_2-0.357521 c_3\nonumber\\
&+0.00321992 c_4+0.149225 c_5-0.179733 c_6+0.201381 c_7-0.273013 c_8+0.363495 c_9\nonumber\\
&+1.02949,\nonumber\\
&2 c_1-1.94552 c_2-1.4229 c_3+3.67058 c_4-0.899325 c_5-3.76315 c_6+3.69301 c_7\nonumber\\
&+1.73474 c_8-5.34994 c_9=2.0671 c_0-1.35879 c_1+0.467147 c_2+0.0754891 c_3\nonumber\\
&-0.190447 c_4+0.181318 c_5-0.286462 c_6+0.412197 c_7-0.323585 c_8-0.00297546 c_9\nonumber\\
&+0.999375,\nonumber\\
&2 c_1-2.90\times 10^{-12} c_2-3. c_3+7.26\times 10^{-12} c_4+3.75 c_5-1.27\times 10^{-11} c_6-4.375 c_7\nonumber\\
&+1.45\times 10^{-11} c_8+4.92187 c_9=2.57737 c_0-1.24691 c_1+0.0325593 c_2+0.232835 c_3\nonumber\\
&-0.149328 c_4+0.334238 c_5-0.444622 c_6+0.0888524 c_7+0.336123 c_8-0.347588 c_9\nonumber\\
&+0.843606,\nonumber\\
&2 c_1+1.94552 c_2-1.4229 c_3-3.67058 c_4-0.899325 c_5+3.76315 c_6+3.69301 c_7\nonumber\\
&-1.73474 c_8-5.34994 c_9=3.33417 c_0-1.00052 c_1-0.321703 c_2+0.118992 c_3\nonumber\\
&-0.25935 c_4+0.51873 c_5-0.0110905 c_6-0.384329 c_7+0.201588 c_8-0.142563 c_9\nonumber\\
&+0.433197,
\end{align}
\begin{align}
&2 c_1+3.68023 c_2+2.64337 c_3-1.12378 c_4-4.85513 c_5-5.20305 c_6-1.25039 c_7\nonumber\\
&+4.30287 c_8+7.01511 c_9=4.29957 c_0-0.543897 c_1-0.410591 c_2-0.0149795 c_3\nonumber\\
&-0.593966 c_4+0.180545 c_5+0.347571 c_6-0.143285 c_7+0.283939 c_8-0.00619507 c_9\nonumber\\
&-0.329943,	\nonumber\\
	&2 c_1+5.01619 c_2+7.48422 c_3+7.91151 c_4+5.52683 c_5+0.671517 c_6-5.23159 c_7\nonumber\\
	&-10.1398 c_8-12.1285 c_9=5.29362 c_0+0.0717492 c_1-0.123311 c_2+0.218103 c_3\nonumber\\
	&-0.608446 c_4-0.247734 c_5+0.0628161 c_6-0.431719 c_7+0.0139914 c_8+0.27356 c_9\nonumber\\
	&-1.34381,\nonumber\\
	&2 c_1+5.80896 c_2+11.06 c_3+17.2397 c_4+23.7293 c_5+29.8547 c_6+34.9414 c_7\nonumber\\
	&+38.3707 c_8+39.6326 c_9=6.01241 c_0+0.596444 c_1+0.311404 c_2+0.749068 c_3\nonumber\\
	&-0.0558017 c_4+0.131269 c_5+0.453239 c_6-0.170842 c_7-0.0392494 c_8+0.210571 c_9\nonumber\\
	&-2.20128,\nonumber\\
	&c_0-c_1+c_2-c_3+c_4-c_5+c_6-c_7+c_8-c_9=0.
\end{align}
Solving, using Newton's iterative method with the starting points $c_i=0$ $(0\leq i\leq 8)$,
we obtain the roots
\begin{align}\label{eq10 exam 5.1}
	&	c_0=0.718282,\quad c_1= 0.845155,\quad c_2= 0.139864,\quad c_3= 0.0139313,\quad c_4=0.000992588\nonumber\\
	&c_5=0.0000550476,\quad c_6= 2.49\times 10^{-6}, \quad c_7= 9.60\times 10^{-8},\quad c_8=3.20\times 10^{-9},\nonumber\\
	&c_9=9.40\times 10^{-11}.
\end{align}
Substituting the numerical values \eqref{eq10 exam 5.1} into the series solution \eqref{eqsoluv}, we obtain the approximate solution
\begin{align}
	\phi_{9,0}(x)=&2.24\times 10^{-17}+1.000000x+0.5 00000x^2+0.166667 x^3+0.0416663 x^4\nonumber\\
	&+0.00833493 x^5+0.00138523 x^6+0.000203511 x^7+0.0000206115 x^8\nonumber\\
	&+4.57\times 10^{-6} x^9.
\end{align}

\end{enumerate}

In the shifted Chebyshev collocation methods that follow, for brevity, we omit the details of the computation and present only the approximate solutions given as polynomials.  
  
\paragraph{Shifted Chebyshev Collocation Method of the First Kind $(\alpha=\beta=-1/2)$.} %
Here we have the approximate solutions
	\begin{align}
		\phi_{8,1}(x)=&1.90\times 10^{-16}+1.000000 x+0.500000 x^2+0.166666x^3+0.0416731 x^4\nonumber\\
		&+0.00831358x^5+0.00142311 x^6+0.000165197 x^7+0.0000412154 x^8.\\
		\phi_{9,1}(x)=&-5.71\times 10^{-18}+1.000000x+0.500000 x^2+0.166667 x^3+0.0416663 x^4\nonumber\\
		&+0.00833484 x^5+0.00138534 x^6+0.000203451 x^7+0.0000206221 x^8\nonumber\\
		&+4.57\times 10^{-6} x^9.
	\end{align}
	
%
%
%
%

\paragraph{Shifted Chebyshev Collocation Method of the Second Kind $(\alpha=\beta=1/2)$.}
 Indeed one has the approximating polynomials
 	\begin{align}
 	\phi_{8,2}(x)=&1.70\times 10^{-16}+1.000000x+0.500000 x^2+0.166665x^3+0.0416742 x^4\nonumber\\
 	&+0.00831203x^5+0.00142413 x^6+0.00016499 x^7+0.0000411755x^8\\
 	\phi_{9,2}(x)=&3.64\times 10^{-17}+1.000000 x+0.500000 x^2+0.166667 x^3+0.0416662 x^4\nonumber\\
 	&+0.00833501 x^5+0.00138513 x^6+0.000203564 x^7+0.0000206021 x^8\nonumber\\
 	&+4.57\times 10^{-6} x^9.
 \end{align}

\paragraph{Shifted Chebyshev Collocation Method of the Third Kind $(\alpha=-1/2,\beta=1/2)$.} It is obtained in this case the approximate solutions 
\begin{align}
	\phi_{8,3}(x)=&2.22\times 10^{-16}+1.000000x+0.500000 x^2+0.166664 x^3+0.0416776 x^4\nonumber\\
	&+0.00830476 x^5+0.00143273 x^6+0.000159699 x^7+0.0000424979 x^8\\
	\phi_{9,3}(x)=&5.44\times 10^{-17}+1. 000000x+0.500000 x^2+0.166667 x^3+0.041666 x^4\nonumber\\
	&+0.00833567x^5+0.00138402 x^6+0.000204663 x^7+0.0000200162 x^8\nonumber\\
	&+4.70\times 10^{-6} x^9.
\end{align}

\paragraph{Shifted Chebyshev Collocation Method of the Fourth Kind $(\alpha=1/2,\beta=-1/2)$.}
Here we obtain the approximate solutions
\begin{align}
	\phi_{8,4}(x)=&1.000000 x+0.500000 x^2+0.166666 x^3+0.0416709 x^4+0.0083189 x^5\nonumber\\
	&+0.00141602 x^6+0.000169981 x^7+0.0000399279 x^8\\
	\phi_{9,4}(x)=&1.72\times 10^{-16}+1.000000 x+0.500000 x^2+0.166667 x^3+0.0416664 x^4\nonumber\\
	&+0.00833439 x^5+0.00138619 x^6+0.000202521 x^7+0.0000211583 x^8\nonumber\\
	&+4.44\times 10^{-6} x^9.
\end{align}

\end{example}

\begin{example}\label{Example 5.2}
Given the initial value problem for the pantograph delay integrodifferential equation of the Volterra type {\rm \cite{Aourir2025}, \cite{Behera2022}, \cite{22Jafari}}
	\begin{align}\label{eq1 exam 5.2}
		\phi'(x)=&\frac{1}{x+1}-\ln (1+x) \left(\frac{x}{2} \ln (1+x)+1\right) +\phi(x)-\frac{1}{2} \ln \left(1+\frac{x}{2}\right)\phi\left(\frac{x}{2} \right)\nonumber\\
		&+\int_0^x \frac{x}{s+1}u(s)\, ds+\int_0^{\frac{x}{2}} \frac{1}{s+1}u(s) \, ds,\\
		\phi(0)=&0.
	\end{align}
	The exact solution is \cite{Aourir2025}
	\begin{equation}
		u(x)=\ln(1+x).
	\end{equation}
Here, the collocation scheme gives $(0\leq q\leq N-1)$
	\begin{align}\label{eq4 exam 5.2}
		&\sum _{n=1}^N c_n (\alpha +\beta +n+1) \sum_{k=0}^{n-1}F_{n-1,k}^{\alpha+1,\beta+1}\left( \mathsf{x}^{N-1}_{q}(\alpha,\beta)\right) ^{k}\nonumber\\
		&=\frac{1}{\mathsf{x}^{N-1}_{q}(\alpha,\beta)+1}-\ln \left( 1+\mathsf{x}^{N-1}_{q}(\alpha,\beta)\right)  \left(\frac{\mathsf{x}^{N-1}_{q}(\alpha,\beta)}{2} \ln \left( 1+\mathsf{x}^{N-1}_{q}(\alpha,\beta)\right) +1\right)\nonumber\\
		&+\sum _{n=0}^N c_n \sum_{k=0}^{n}F_{n,k}^{\alpha,\beta}\left(  \mathsf{x}^{N-1}_{q}(\alpha,\beta)\right) ^{k}-\frac{1}{2}\ln \left( 1+\frac{\mathsf{x}^{N-1}_{q}(\alpha,\beta)}{2}\right) \sum _{n=0}^N c_n \sum_{k=0}^{n}2^{-k}F_{n,k}^{\alpha,\beta}\left(  \mathsf{x}^{N-1}_{q}(\alpha,\beta)\right) ^{k}\nonumber\\
		&+ \sum _{n=0}^Nc_n \sum_{k=0}^{n}F_{n,k}^{\alpha,\beta}\int_0^{ \mathsf{x}^{N-1}_{q}(\alpha,\beta)} \frac{\mathsf{x}^{N-1}_{q}(\alpha,\beta)}{1+s} s^{k}\, ds+\sum _{n=0}^N c_n \sum_{k=0}^{n}F_{n,k}^{\alpha,\beta} \int_0^{\frac{ \mathsf{x}^{N-1}_{q}(\alpha,\beta)}{2}} \frac{s^{k}}{s+1} \, ds,
	\end{align}
satisfying the initial condition
	\begin{equation}\label{eq5 exam 5.2}
	\sum_{k=0}^{N}c_{k}(-1)^{k}\frac{\Gamma(k+\beta+1)}{\Gamma(\beta+1)k!}=0.
	\end{equation}
	The collocation scheme \eqref{eq4 exam 5.2}-\eqref{eq5 exam 5.2} now gives a set of $N+1$ algebraic equations in the unknown coefficients $c_{n}$ $(0\leq n\leq N)$.

	We now evaluate the collocation scheme \eqref{eq4 exam 5.2}-\eqref{eq5 exam 5.2} explicitly by considering  special values $N=14,15$; and  $(\alpha,\beta)=(0,0)$, $(\alpha,\beta)=(-1/2,-1,2)$, $(\alpha,\beta)=(1/2,1/2)$, $(\alpha,\beta)=(-1/2,1/2)$, and  $(\alpha,\beta)=(1/2,-1/2)$.
	
	\paragraph{Legendre Collocation Method $(\alpha=\beta=0)$.} We have the approximate solutions
	\begin{align}
		\phi_{14,0}(x)=&-2.08\times 10^{-17}+1.000000x-0.500000 x^2+0.333333 x^3-0.249992x^4\nonumber\\
		&+0.199915 x^5-0.166101x^6+0.140259 x^7-0.116398 x^8+0.0899575 x^9\nonumber\\
		&-0.060288x^{10}+0.0322921 x^{11}-0.0125706 x^{12}+0.00310052 x^{13}\nonumber\\
		&-0.000359495 x^{14}\\
		\phi_{15,0}(x)=&3.60\times 10^{-17}+1. 000000x-0.500000 t^2+0.333333 x^3-0.249998 t^4+0.199974 x^5\nonumber\\
		&-0.16647 t^6+0.141812 x^7-0.120995 x^8+0.0996671 x^9-0.0750085 x^{10}\nonumber\\
		&+0.0481863 x^{11}-0.0244988 x^{12}+0.00901339 x^{13}-0.00209998 x^{14}\nonumber\\
		&+0.000230368 x^{15}.
	\end{align}

	\paragraph{Shifted Chebyshev Collocation Method of the First Kind $(\alpha=\beta=-1/2)$.} 
	We have the approximate solutions
	\begin{align}
		\phi_{14,1}(x)=&7.65\times 10^{-18}+1.000000 x-0.500000 x^2+0.333333 x^3-0.249993 x^4+0.199927 x^5\nonumber\\
		&-0.166156 x^6+0.140427 x^7-0.116758 x^8+0.0904952 x^9-0.0608472 x^{10}\nonumber\\
		&+0.0326877 x^{11}-0.0127513 x^{12}+0.00314837 x^{13}-0.000365028 x^{14}\\
		\phi_{15,1}(x)=&-8.01\times 10^{-17}+1.000000 x-0.500000 t^2+0.333333x^3-0.249998 x^4\nonumber\\
		&+0.199978x^5-0.166491 x^6+0.141893x^7-0.121203 x^8+0.100052 x^9\nonumber\\
		&-0.0755168 x^{10}+0.0486628x^{11}-0.0248072 x^{12}+0.00914394 x^{13}\nonumber\\
		&-0.00213236 x^{14}+0.000233912 x^{15}.
	\end{align}

	\paragraph{Shifted Chebyshev Collocation Method of the Second Kind $(\alpha=\beta=1/2)$.} 
Indeed one has the approximating polynomials
	\begin{align}
		\phi_{14,2}(x)=&9.06\times 10^{-17}+1.000000x-0.500000 x^2+0.333333x^3-0.24999 x^4+0.199903x^5\nonumber\\
		&-0.166048 x^6+0.140098 x^7-0.11606 x^8+0.0894585 x^9-0.059773 x^{10}\nonumber\\
		&+0.0319295x^{11}-0.0124053 x^{12}+0.0030568 x^{13}-0.00035444 x^{14}\\
		\phi_{15,2}(x)=&7.85\times 10^{-17}+1.000000 x-0.5 x^2+0.333333 x^3-0.249997 x^4+0.19997 x^5\nonumber\\
		&	-0.166448 x^6+0.141734 x^7-0.120797 x^8+0.0993061 x^9-0.0745355 x^{10}\nonumber\\
		&+0.0477456x^{11}-0.0242148 x^{12}+0.00889341 x^{13}-0.00207025 x^{14}\nonumber\\
		&+0.000227114x^{15}.
	\end{align}

	\paragraph{Shifted Chebyshev Collocation Method of the Third Kind $(\alpha=-1/2,\beta=1/2)$.} 	
We obtain the approximating polynomials
	\begin{align}
		\phi_{14,3}(x)=&5.55\times 10^{-17}+1.000000x-0.5 x^2+0.333332 x^3-0.249986 x^4+0.199873x^5\nonumber\\
		&-0.165895 x^6+0.139568x^7-0.114782 x^8+0.0872792x^9-0.0571644 x^{10}\nonumber\\
		&+0.0297834x^{11}-0.0112498 x^{12}+0.00269018 x^{13}-0.000302477x^{14}\\
		\phi_{15,3}(x)=&-1.45\times 10^{-16}+1.000000 x-0.500000 x^2+0.333333 x^3-0.249996 x^4\nonumber\\
		&+0.19996 x^5-0.166389 x^6+0.141495 x^7-0.120103 x^8+0.0978629x^9\nonumber\\
		&-0.0723719 x^{10}+0.0454275x^{11}-0.0224838 x^{12}+0.00803788 x^{13}\nonumber\\
		&-0.00181874 x^{14}+0.000193821 x^{15}.
	\end{align}

	\paragraph{Shifted Chebyshev Collocation Method of the Fourth Kind $(\alpha=1/2,\beta=-1/2)$.} 
In this case, the collocation scheme \eqref{eq4 exam 5.2} yields 
\begin{align}
	\phi_{14,4}(x)=&2.77\times 10^{-17}+1.000000 x-0.500000 x^2+0.333333 x^3-0.249995 x^4\nonumber\\
	&+0.199945x^5-0.166264 x^6+0.140844 x^7-0.117865 x^8+0.0925331 x^9\nonumber\\
	&-0.0634533 x^{10}+0.0349573x^{11}-0.0140355 x^{12}+0.00357403 x^{13}\nonumber\\
	&-0.000427748 x^{14}\\
	\phi_{15,4}(x)=&5.55\times 10^{-17}+1.000000x-0.500000 x^2+0.333333 x^3-0.249999 x^4\nonumber\\\
	&+0.199984 x^5-0.166531 x^6+0.142073 x^7-0.121776 x^8+0.101343 x^9\nonumber\\\
	&-0.0775887 x^{10}+0.0510169x^{11}-0.0266575 x^{12}+0.0101007 x^{13}\nonumber\\\
	&-0.00242519x^{14}+0.000274097 x^{15}.
\end{align}

\end{example}

\begin{example}\label{Example 5.3}
	Consider the initial value problem for the pantograph delay integrodifferential equation of Volterra type {\rm \cite{Ahmad2021}, \cite{Aourir2025}}
\begin{align}\label{eq1 exam 5.3}
	\phi'(x)=&2+2x-u(x)-u\left( \frac{x}{2}\right) +x (2 x+1) \int_0^x e^{s (x-s)} u(s) \, ds\nonumber\\
	&+\frac{1}{2} \int_0^{\frac{x}{2}} x e^{s  \left(\frac{x}{2}-s\right)} u(s) \, ds,\\
	\phi(0)=&1.
\end{align}
The exact solution is \cite{Aourir2025}
\begin{equation}
	u(x)=e^{x^{2}}.
\end{equation}
Indeed the collocation scheme yields $(0\leq q\leq N-1)$
\begin{align}\label{eq4 exam 5.3}
	&\sum _{n=1}^N c_n (\alpha +\beta +n+1) \sum_{k=0}^{n-1}F_{n-1,k}^{\alpha+1,\beta+1}\left( \mathsf{x}^{N-1}_{q}(\alpha,\beta)\right) ^{k}\nonumber\\
	&=2+2\mathsf{x}^{N-1}_{q}(\alpha,\beta)-\sum _{n=0}^N c_n \sum_{k=0}^{n}F_{n,k}^{\alpha,\beta}\left(  \mathsf{x}^{N-1}_{q}(\alpha,\beta)\right) ^{k}-\sum _{n=0}^N c_n \sum_{k=0}^{n}2^{-k}F_{n,k}^{\alpha,\beta}\left(  \mathsf{x}^{N-1}_{q}(\alpha,\beta)\right) ^{k}\nonumber\\
	&+\mathsf{x}^{N-1}_{q}(\alpha,\beta)\left(2\mathsf{x}^{N-1}_{q}(\alpha,\beta)+1 \right)\sum _{n=0}^Nc_n \sum_{k=0}^{n}F_{n,k}^{\alpha,\beta}\int_0^{ \mathsf{x}^{N-1}_{q}(\alpha,\beta)}e^{s\left( \mathsf{x}^{N-1}_{q}(\alpha,\beta)-s\right) } s^{k}\, ds\nonumber\\
	&+\frac{1}{2}\sum _{n=0}^N c_n \sum_{k=0}^{n}F_{n,k}^{\alpha,\beta} \int_0^{\frac{ \mathsf{x}^{N-1}_{q}(\alpha,\beta)}{2}}\mathsf{x}^{N-1}_{q}(\alpha,\beta) e^{s\left( \frac{\mathsf{x}^{N-1}_{q}(\alpha,\beta)}{2}-s\right) } s^{k} \, ds,
\end{align}
satisfying the initial condition
\begin{equation}\label{eq5 exam 5.3}
	\sum_{k=0}^{N}c_{k}(-1)^{k}\frac{\Gamma(k+\beta+1)}{\Gamma(\beta+1)k!}=1.
\end{equation}
The collocation scheme \eqref{eq4 exam 5.3}-\eqref{eq5 exam 5.3} now gives a set of $N+1$ algebraic equations in the unknown coefficients $c_{n}$ $(0\leq n\leq N)$.

We now evaluate the collocation scheme \eqref{eq4 exam 5.3}-\eqref{eq5 exam 5.3} explicitly by considering  special values $N=12,14$; and  $(\alpha,\beta)=(0,0)$, $(\alpha,\beta)=(-1/2,-1,2)$, $(\alpha,\beta)=(1/2,1/2)$, $(\alpha,\beta)=(-1/2,1/2)$, and  $(\alpha,\beta)=(1/2,-1/2)$.

\paragraph{Legendre Collocation Method $(\alpha=\beta=0)$.} 

In this case, the collocation scheme \eqref{eq4 exam 5.3}-\eqref{eq5 exam 5.3} gives
\begin{align}
	\phi_{12,0}(x)=&1.000000-1.74\times 10^{-8}x+1.000000 x^2-0.0000351514 x^3+0.500436 x^4\nonumber\\
	&-0.00308718 x^5+0.180335 x^6-0.039637x^7+0.118535 x^8-0.0994707x^9\nonumber\\
	&+0.0916269 x^{10}-0.0415608 x^{11}+0.0111388 x^{12}\\
	\phi_{14,0}(x)=&1.000000-2.23\times 10^{-10}x+1.000000 x^2-8.16\times 10^{-7} x^3+0.500014x^4\nonumber\\
	&-0.000135272 x^5+0.167511x^6-0.00354297 x^7+0.0519986x^8-0.0212634 x^9\nonumber\\
	&+0.0392504 x^{10}-0.0312716 x^{11}+0.0225265 x^{12}-0.00872902 x^{13}\nonumber\\
	&+0.00192493 x^{14}.
\end{align}

	\paragraph{Shifted Chebyshev Collocation Method of the First Kind $(\alpha=\beta=-1/2)$.} 
From the collocation scheme \eqref{eq4 exam 5.3}-\eqref{eq5 exam 5.3}, we have the approximate solutions
\begin{align}
	\phi_{12,1}(x)=&1.000000-5.66\times 10^{-9} x+1.000000 x^2-0.0000260323 x^3+0.500362 x^4\nonumber\\
	&-0.00274959 x^5+0.179405 x^6-0.0380706 x^7+0.117004 x^8-0.0987928 x^9\nonumber\\
	&+0.0917435 x^{10}-0.0418047 x^{11}+0.0112102 x^{12}\\
	\phi_{14,1}(x)=&1.000000-6.71\times 10^{-11} x+1.000000 x^2-5.71\times 10^{-7} x^3+0.500011 x^4\nonumber\\
	&-0.000115694 x^5+0.167427x^6-0.00330822 x^7+0.0515634x^8-0.0207404 x^9\nonumber\\
	&+0.0388765 x^{10}-0.0311625 x^{11}+0.0225697x^{12}-0.00877353 x^{13}\nonumber\\
	&+0.00193539 x^{14}.
\end{align}

	\paragraph{Shifted Chebyshev Collocation Method of the Second Kind $(\alpha=\beta=1/2)$.} 
	
We obtain from the collocation scheme \eqref{eq4 exam 5.3}-\eqref{eq5 exam 5.3}, the approximate solutions
	\begin{align}
		\phi_{12,2}(x)=&1.000000-3.64\times 10^{-8} x+1.000000 x^2-0.0000451135 x^3+0.50051 x^4\nonumber\\
		&-0.00341464 x^5+0.181212 x^6-0.0410844x^7+0.119931 x^8-0.100082x^9\nonumber\\
		&+0.0915176 x^{10}-0.0413383 x^{11}+0.0110738 x^{12}\\
		\phi_{14,2}(x)=&1.000000-4.98\times 10^{-10}x+1.000000 x^2-1.10\times 10^{-6} x^3+0.500017x^4\nonumber\\
		&-0.000155224 x^5+0.167593 x^6-0.00376807 x^7+0.0524087x^8-0.0217504 x^9\nonumber\\
		&+0.0395956 x^{10}-0.0313713 x^{11}+0.0224861 x^{12}-0.00868787 x^{13}\nonumber\\
		&+0.00191527 x^{14}.
	\end{align}

	\paragraph{Shifted Chebyshev Collocation Method of the Third Kind $(\alpha=-1/2,\beta=1/2)$.} 	
Using the given zeros in the collocation scheme \eqref{eq4 exam 5.3}, we obtain the approximate solutions
	\begin{align}
		\phi_{12,3}(x)=&1.000000-7.86\times 10^{-8} x+1.000000 x^2-0.0000845049 x^3+0.500896 x^4\nonumber\\
		&-0.00563572 x^5+0.189315 x^6-0.0605217 x^7+0.151004 x^8-0.132864 x^9\nonumber\\
		&+0.113433 x^{10}-0.0497489 x^{11}+0.0124851 x^{12}\\
		\phi_{14,3}(x)=&1.000000-1.07\times 10^{-9}x+1.000000 x^2-2.09\times 10^{-6} x^3+0.50003 x^4\nonumber\\
		&-0.000263504 x^5+0.16816x^6-0.00577882 x^7+0.0573763x^8-0.030388 x^9\nonumber\\
		&+0.050126x^{10}-0.0401812 x^{11}+0.0273032 x^{12}-0.010238 x^{13}\nonumber\\
		&+0.00213788 x^{14}.
	\end{align}

\paragraph{Shifted Chebyshev Collocation Method of the Fourth Kind $(\alpha=1/2,\beta=-1/2)$.} 
The collocation scheme \eqref{eq4 exam 5.3}-\eqref{eq5 exam 5.3} yields the approximate solutions
\begin{align}
	\phi_{12,4}(x)=&1.000000-2.52\times 10^{-9} x+1.000000 x^2-0.0000135026x^3+0.500201 x^4\nonumber\\
	&-0.00163182x^5+0.174707 x^6-0.0254794 x^7+0.0949818 x^8-0.0737596x^9\nonumber\\
	&+0.0739193 x^{10}-0.0345841 x^{11}+0.00994032 x^{12}\\
	\phi_{14,4}(x)=&1.000000-3.01\times 10^{-11} x+1.000000 x^2-2.93\times 10^{-7} x^3+0.500006 x^4\nonumber\\
	&-0.0000668341 x^5+0.167131 x^6-0.00212646 x^7+0.0483529 x^8-0.0146977 x^9\nonumber\\
	&+0.030997 x^{10}-0.024177 x^{11}+0.0185522 x^{12}-0.00742186 x^{13}\nonumber\\
	&+0.00173346 x^{14}.
\end{align}

\end{example}	

\section{Results and Discussion}	
In this section, we elaborate on the results presented in Section \ref{Sec Exam}. Three illustrative examples of the governing pantograph delay integrodifferential equation are considered, namely, Examples \ref{Example 5.1}-\ref{Example 5.3}. Our collocation points are the zeros of the underlying shifted Jacobi polynomials. All the zeros encountered in our computations are listed in Tables \ref{table zeroP8912}-\ref{table zeroP1415}. The results obtained in the three examples are compared with those in \cite{Aourir2025} and are presented in Tables \ref{table Example 5.1 num sol}-\ref{table Example5.3sol&errorsN=14}. It is to be noted that our choices of order $N$ of the approximation are for comparison purposes and in line with those considered in the paper \cite{Aourir2025} under comparison. Similarly, the chosen  parameters $\alpha,\beta$ are for identification according to whether the underlying basis functions are shifted Legendre polynomials, shifted Chebyshev polynomials of the first kind, shifted Chebyshev polynomials of the second kind, shifted Chebyshev polynomials of the third kind, or shifted Chebyshev polynomials of the fourth kind. Thus in each example, we apply shifted Legendre collocation method, shifted Chebyshev collocation method of the first kind,  shifted Chebyshev collocation method of the second kind,  shifted Chebyshev collocation method of the third kind, and shifted Chebyshev collocation method of the fourth kind. Similar computations can be done for several other values of $\alpha,\beta\in(-1,\infty)$. We now present the discussion of results obtained in the three examples one after the other.

Example \ref{Example 5.1} ($N=8,9$): Table \ref{table Example 5.1 num sol} gives numerical values of approximate solutions obtained using the present methods, and their comparison with approximate solutions reported in \cite[$N=8$ in Example 6.1]{Aourir2025}; $x=0.1,0.2,\cdots, 1.0$. In Table \ref{table Example5.1sol&errorsN=8}, we give absolute errors, $L_{2}$-norm errors, and  $L_{\infty}$-norm errors obtained from our methods and their comparison with their counterparts presented in \cite[Example 6.1]{Aourir2025}. Of all the methods considered in Table \ref{table Example5.1sol&errorsN=8}, the shifted Legendre collocation method gives the smallest $L_{2}$-norm and $L_{\infty}$-norm errors. Four of our five methods give better $L_{2}$-norm error compared to the one presented in \cite[Example 6.1]{Aourir2025}.  Moreover, in view of $L_{\infty}$-norm errors generated by our respective methods, as shown in Tables \ref{table Example5.1sol&errorsN=8} and \ref{table Example5.1sol&errorsN=9}, it is clear that the present methods yield better accurate results compared to the existing results in \cite[$N=8,9$ in Example 6.1]{Aourir2025}.

Example \ref{Example 5.2} ($N=14,15$): In Table \ref{table Example 5.2 num sol}, we present  numerical values of approximate solutions obtained using the present methods, and their comparison with approximate solutions reported in \cite[$N=14$  in Example 6.2]{Aourir2025} for $x=0.1,0.2,\cdots, 1.0$. Table \ref{table Example5.2sol&errorsN=14} gives absolute errors, $L_{2}$-norm errors, and  $L_{\infty}$-norm errors obtained from our methods and their comparison with those given in \cite[Example 6.2]{Aourir2025}. In this example as well, the shifted Legendre collocation method gives the smallest $L_{2}$-norm and $L_{\infty}$-norm errors, as shown in Table \ref{table Example5.2sol&errorsN=14}. All the present methods yield better $L_{2}$-norm and $L_{2}$-norm errors compared to the ones given in \cite[Example 6.2]{Aourir2025}.  Furthermore, in view of $L_{\infty}$-norm errors generated by our respective methods, as shown in Tables \ref{table Example5.2sol&errorsN=14} and \ref{table Example5.2sol&errorsN=15}, it is revealed that the present methods yield better accurate results compared to the existing results in \cite[$N=14,15$ in Example 6.2]{Aourir2025}.

Example \ref{Example 5.3} ($N=12,14$): Table \ref{table Example 5.3 num sol} presents  numerical values of approximate solutions obtained using the present methods, and their comparison with approximate solutions reported in \cite[$N=12$ in Example 6.3]{Aourir2025} for $x=0.1,0.2,\cdots, 1.0$. In Table \ref{table Example5.3sol&errorsN=12}, we give absolute errors, $L_{2}$-norm errors, and  $L_{\infty}$-norm errors obtained from our methods and the comparison of $L_{\infty}$-norm errors with those given in \cite[Example 6.3]{Aourir2025}. It is observed that the shifted Legendre collocation method gives the smallest  $L_{\infty}$-norm errors, as shown in Table \ref{table Example5.3sol&errorsN=12}, while the shifted Chebyshev collocation method of the second kind gives the smallest $L_{2}$-norm error, as demonstrated in Table \ref{table Example5.3sol&errorsN=12}. In Table \ref{table Example5.3sol&errorsN=14}, all the present methods yield better $L_{2}$-norm and $L_{\infty}$-norm errors compared to the ones given in \cite[Example 6.3]{Aourir2025}.  Hence, the present methods yield more accurate results compared to the existing results in \cite[$N=12,14$ in Example 6.3]{Aourir2025}.

\begin{table}[H]
\caption {Comparison of numerical solutions $\phi_{8,i}(x)$ $(i=0,1,2,3,4)$ with the exact solution, and other published results for Example \ref{Example 5.1}.}
\centering 
\begin{tabular}{|c| c| c |c|c|c|c|c|} 
	\hline
	$x$&$\phi_{{\rm ex.}}(x)$&$\phi_{8,0}(x)$&$\phi_{8,1}(x)$&$\phi_{8,2}(x)$&$\phi_{8,3}(x)$&$\phi_{8,4}(x)$&BCM \cite{Aourir2025}
	\\[0.5ex] 
	\hline 
	$0.1$ &$0.105171$&$0.105171$&$0.105171$&$0.105171$&$0.105171$&$0.105171$&$0.105171$
	\\
	\hline
	$0.2$ &$0.221403$&$ 0.221403$&$0.221403$&$ 0.221403$&$0.221403$&$ 0.221403$& $0.221403$
	\\
	\hline
	$0.3$ &$0.349859$&$ 0.349859$&$0.349859$&$ 0.349859$&$0.349859$&$ 0.349859$&$0.349859$
	\\
	\hline
	$0.4$ &$0.491825$&$ 0.491825$&$0.491825$&$ 0.491825$&$0.491825$&$ 0.491825$& $0.491825$
	\\
	\hline
	$0.5$ &$0.648721$&$ 0.648721$&$0.648721$&$ 0.648721$&$0.648721$&$ 0.648721$& $0.648721$
	\\
	\hline
	$0.6$ &$0.822119$&$ 0.822119$&$0.822119$&$ 0.822119$&$0.822119$&$ 0.822119$& $0.822119$
	\\
	\hline
	$0.7$ &$1.013750$&$ 1.013750$&$1.013750$&$ 1.013750$&$1.013750$&$ 1.013750$&$1.013750$
	\\
	\hline
	$0.8$ &$1.225540$&$ 1.225540$&$1.225540$&$ 1.225540$&$1.225540$&$ 1.225540$&$1.225540$
	\\
	\hline
	$0.9$ &$1.459600$&$ 1.459600$&$1.459600$&$ 1.459600$&$1.459600$&$ 1.459600$ & $1.459600$
	\\
	\hline
	$1.0$ &$1.718280$&$ 1.718280$&$1.718280$&$ 1.718280$&$1.718280$&$ 1.718280$& $1.718280$
	\\
	\hline
\end{tabular}
\label{table Example 5.1 num sol} 
\end{table}

\begin{table}[H]
\caption {Comparison of numerical errors $E_{8,i}(x)$ $(i=0,1,2,3,4)$ with existing errors for Example \ref{Example 5.1}.}
\centering 
\begin{tabular}{|c| c| c |c|c|c|c|} 
	\hline
	$x$&$E_{8,0}(x)$&$E_{8,1}(x)$&$E_{8,2}(x)$&$E_{8,3}(x)$&$E_{8,4}(x)$&BCM \cite{Aourir2025}
	\\[0.5ex] 
	\hline 
	$0.1$ &$3.74\times 10^{-11}$&$5.00\times 10^{-11}$&$1.05\times 10^{-11}$&$4.11\times 10^{-11}$&$1.82\times 10^{-11}$& $5.39\times 10^{-12}$
	\\
	\hline
	$0.2$ &$3.73\times 10^{-11}$&$4.63\times 10^{-11}$&$6.57\times 10^{-11}$&$1.07\times 10^{-10}$&$2.61\times 10^{-11}$& $1.08\times 10^{-11}$
	\\
	\hline
	$0.3$ &$2.01\times 10^{-11}$&$9.29\times 10^{-12}$&$7.79\times 10^{-11}$&$1.89\times 10^{-10}$&$2.69\times 10^{-11}$&$2.31\times 10^{-11}$
	\\
	\hline
	$0.4$ &$5.06\times 10^{-11}$&$1.03\times 10^{-10}$&$3.33\times 10^{-11}$&$1.17\times 10^{-10}$&$4.65\times 10^{-11}$&$9.25\times 10^{-12}$  
	\\
	\hline
	$0.5$ &$5.31\times 10^{-12}$&$3.51\times 10^{-11}$&$7.51\times 10^{-11}$&$1.30\times 10^{-10}$&$2.31\times 10^{-11}$&$2.73\times 10^{-11}$ 
	\\
	\hline
	$0.6$ &$4.48\times 10^{-11}$&$3.94\times 10^{-11}$&$1.23\times 10^{-10}$&$2.35\times 10^{-10}$&$1.72\times 10^{-11}$&$1.92\times 10^{-11}$ 
	\\
	\hline
	$0.7$ &$2.16\times 10^{-11}$&$4.90\times 10^{-11}$&$9.08\times 10^{-11}$&$2.63\times 10^{-10}$&$7.21\times 10^{-11}$&$4.66\times 10^{-11}$
	\\
	\hline
	$0.8$ &$3.99\times 10^{-11}$&$1.10\times 10^{-10}$&$1.20\times 10^{-10}$&$2.68\times 10^{-10}$&$1.88\times 10^{-11}$&$5.72\times 10^{-11}$ 
	\\
	\hline
	$0.9$ &$4.03\times 10^{-11}$&$1.60\times 10^{-11}$&$2.11\times 10^{-10}$&$4.06\times 10^{-10}$&$2.75\times 10^{-11}$&$7.12\times 10^{-11}$ 
	\\
	\hline
	$1.0$ &$3.75\times 10^{-12}$&$7.95\times 10^{-11}$&$2.75\times 10^{-10}$&$5.16\times 10^{-10}$&$4.87\times 10^{-11}$&$8.27\times 10^{-10}$ 
	\\
	\hline\hline
	$L_{\infty}$ &$5.06\times 10^{-11}$&$1.10\times 10^{-10}$&$2.75\times 10^{-10}$&$5.16\times 10^{-10}$&$7.21\times 10^{-11}$& $8.27\times 10^{-10}$ \\
	\hline
	$L_{2}$ &$1.07\times 10^{-10}$&$1.98\times 10^{-10}$&$4.19\times 10^{-10}$&$8.42\times 10^{-10}$&$1.15\times 10^{-10}$& $8.34\times 10^{-10}$\\
	\hline
\end{tabular}
\label{table Example5.1sol&errorsN=8} 
\end{table}


\begin{table}[H]
\caption {Comparison of numerical errors $E_{9,i}(x)$ $(i=0,1,2,3,4)$ with existing error for Example \ref{Example 5.1}.}
\centering 
\begin{tabular}{|c| c| c |c|c|c|c|} 
	\hline
	$x$&$E_{9,0}(x)$&$E_{9,1}(x)$&$E_{9,2}(x)$&$E_{9,3}(x)$&$E_{9,4}(x)$&BCM \cite{Aourir2025}
	\\[0.5ex] 
	\hline 
	$0.1$ &$7.30\times 10^{-13}$&$6.39\times 10^{-13}$&$1.77\times 10^{-13}$&$4.85\times 10^{-13}$&$1.14\times 10^{-13}$& $-$
	\\
	\hline
	$0.2$ &$1.18\times 10^{-12}$&$1.01\times 10^{-12}$&$2.09\times 10^{-12}$&$4.03\times 10^{-12}$&$2.63\times 10^{-13}$& $-$
	\\
	\hline
	$0.3$ &$8.68\times 10^{-13}$&$1.98\times 10^{-12}$&$9.57\times 10^{-13}$&$3.20\times 10^{-12}$&$1.17\times 10^{-12}$&$-$
	\\
	\hline
	$0.4$ &$5.25\times 10^{-13}$&$1.05\times 10^{-12}$&$1.20\times 10^{-12}$&$2.19\times 10^{-12}$&$2.69\times 10^{-13}$&$-$  
	\\
	\hline
	$0.5$ &$1.15\times 10^{-12}$&$1.27\times 10^{-12}$&$2.70\times 10^{-12}$&$4.92\times 10^{-12}$&$5.98\times 10^{-13}$&$-$ 
	\\
	\hline
	$0.6$ &$5.07\times 10^{-13}$&$1.09\times 10^{-12}$&$1.85\times 10^{-12}$&$5.42\times 10^{-12}$&$1.54\times 10^{-12}$&$-$ 
	\\
	\hline
	$0.7$ &$9.67\times 10^{-13}$&$2.33\times 10^{-12}$&$2.25\times 10^{-12}$&$4.91\times 10^{-12}$&$2.70\times 10^{-13}$&$-$
	\\
	\hline
	$0.8$ &$1.20\times 10^{-12}$&$5.20\times 10^{-13}$&$4.42\times 10^{-12}$&$7.99\times 10^{-12}$&$1.03\times 10^{-12}$&$-$ 
	\\
	\hline
	$0.9$ &$6.17\times 10^{-13}$&$1.35\times 10^{-12}$&$3.75\times 10^{-12}$&$9.89\times 10^{-12}$&$2.07\times 10^{-12}$&$-$ 
	\\
	\hline
	$1.0$ &$2.51\times 10^{-13}$&$1.05\times 10^{-12}$&$5.24\times 10^{-12}$&$1.32\times 10^{-11}$&$2.37\times 10^{-12}$&$-$ 
	\\
	\hline\hline
	$L_{\infty}$ &$1.20\times 10^{-12}$&$2.33\times 10^{-12}$&$5.24\times 10^{-12}$&$1.32\times 10^{-11}$&$2.37\times 10^{-12}$&$2.35\times 10^{-11}$  \\
	\hline
	$L_{2}$  &$2.71\times 10^{-12}$&$4.24\times 10^{-12}$&$9.15\times 10^{-12}$&$2.11\times 10^{-11}$&$3.92\times 10^{-12}$& $-$ \\
	\hline
\end{tabular}
\label{table Example5.1sol&errorsN=9} 
\end{table}

\begin{table}[H]
	\caption {Comparison of numerical solutions $\phi_{14,i}(x)$ $(i=0,1,2,3,4)$ with the exact solution, and other published results for Example \ref{Example 5.2}.}
	\centering 
	\begin{tabular}{|c| c| c |c|c|c|c|c|} 
		\hline
		$x$&$\phi_{{\rm ex.}}(x)$&$\phi_{14,0}(x)$&$\phi_{14,1}(x)$&$\phi_{14,2}(x)$&$\phi_{14,3}(x)$&$\phi_{14,4}(x)$&BCM \cite{Aourir2025}
		\\[0.5ex] 
		\hline 
		$0.1$ &$0.0953102$&$0.0953102$&$0.0953102$&$0.0953102$&$0.0953102$&$0.0953102$&$0.0953102$
		\\
		\hline
		$0.2$ &$0.182322$&$ 0.182322$&$0.182322$&$ 0.182322$&$0.182322$&$ 0.182322$& $0.182322$
		\\
		\hline
		$0.3$ &$0.262364$&$ 0.262364$&$0.262364$&$ 0.262364$&$0.262364$&$ 0.262364$&$0.262364$
		\\
		\hline
		$0.4$ &$0.336472$&$ 0.336472$&$0.336472$&$ 0.336472$&$0.336472$&$ 0.336472$& $0.336472$
		\\
		\hline
		$0.5$ &$0.405465$&$ 0.405465$&$0.405465$&$ 0.405465$&$0.405465$&$ 0.405465$& $0.405465$
		\\
		\hline
		$0.6$ &$0.470004$&$ 0.470004$&$0.470004$&$ 0.470004$&$0.470004$&$ 0.470004$& $0.470004$
		\\
		\hline
		$0.7$ &$0.530628$&$ 0.530628$&$0.530628$&$ 0.530628$&$0.530628$&$ 0.530628$&$0.530628$
		\\
		\hline
		$0.8$ &$0.587787$&$ 0.587787$&$0.587787$&$ 0.587787$&$0.587787$&$ 0.587787$&$0.587787$
		\\
		\hline
		$0.9$ &$0.641854$&$ 0.641854$&$0.641854$&$ 0.641854$&$0.641854$&$ 0.641854$ & $0.641854$
		\\
		\hline
		$1.0$ &$0.693147$&$ 0.693147$&$0.693147$&$ 0.693147$&$0.693147$&$ 0.693147$& $0.693147$
		\\
		\hline
	\end{tabular}
	\label{table Example 5.2 num sol} 
\end{table}

\begin{table}[H]
	\caption {Comparison of numerical errors $E_{14,i}(x)$ $(i=0,1,2,3,4)$ with existing errors for Example \ref{Example 5.2}.}
	\centering 
	\begin{tabular}{|c| c| c |c|c|c|c|} 
		\hline
		$x$&$E_{14,0}(x)$&$E_{14,1}(x)$&$E_{14,2}(x)$&$E_{14,3}(x)$&$E_{14,4}(x)$&BCM \cite{Aourir2025}
		\\[0.5ex] 
		\hline 
		$0.1$ &$5.12\times 10^{-13}$&$1.62\times 10^{-13}$&$1.27\times 10^{-12}$&$2.21\times 10^{-12}$&$4.68\times 10^{-14}$& $1.14\times 10^{-13}$
		\\
		\hline
		$0.2$ &$4.02\times 10^{-14}$&$2.48\times 10^{-13}$&$6.41\times 10^{-13}$&$8.20\times 10^{-13}$&$3.90\times 10^{-13}$& $3.28\times 10^{-13}$
		\\
		\hline
		$0.3$ &$1.79\times 10^{-13}$&$6.01\times 10^{-13}$&$8.51\times 10^{-13}$&$1.92\times 10^{-12}$&$6.61\times 10^{-13}$&$4.83\times 10^{-13}$
		\\
		\hline
		$0.4$ &$1.10\times 10^{-13}$&$1.05\times 10^{-13}$&$1.01\times 10^{-12}$&$1.37\times 10^{-12}$&$5.01\times 10^{-13}$&$4.61\times 10^{-13}$  
		\\
		\hline
		$0.5$ &$3.39\times 10^{-14}$&$1.26\times 10^{-13}$&$1.13\times 10^{-12}$&$2.36\times 10^{-12}$&$6.01\times 10^{-13}$&$3.67\times 10^{-13}$ 
		\\
		\hline
		$0.6$ &$1.61\times 10^{-13}$&$5.28\times 10^{-13}$&$1.18\times 10^{-12}$&$1.83\times 10^{-12}$&$2.66\times 10^{-13}$&$6.01\times 10^{-13}$ 
		\\
		\hline
		$0.7$ &$1.89\times 10^{-13}$&$1.90\times 10^{-13}$&$1.48\times 10^{-12}$&$2.84\times 10^{-12}$&$4.35\times 10^{-13}$&$8.34\times 10^{-13}$
		\\
		\hline
		$0.8$ &$6.21\times 10^{-14}$&$5.43\times 10^{-13}$&$1.69\times 10^{-12}$&$2.67\times 10^{-12}$&$3.21\times 10^{-13}$&$8.38\times 10^{-13}$ 
		\\
		\hline
		$0.9$ &$2.62\times 10^{-13}$&$4.22\times 10^{-13}$&$1.56\times 10^{-12}$&$.32\times 10^{-12}$&$9.15\times 10^{-13}$&$2.94\times 10^{-13}$ 
		\\
		\hline
		$1.0$ &$2.62\times 10^{-13}$&$4.05\times 10^{-13}$&$2.44\times 10^{-12}$&$3.78\times 10^{-12}$&$5.46\times 10^{-13}$&$1.92\times 10^{-11}$ 
		\\
		\hline\hline
		$L_{\infty}$ &$5.12\times 10^{-13}$&$6.01\times 10^{-13}$&$2.44\times 10^{-12}$&$3.78\times 10^{-12}$&$9.15\times 10^{-13}$& $1.92\times 10^{-11}$ \\
		\hline
		$L_{2}$ &$6.66\times 10^{-13}$&$1.19\times 10^{-12}$&$4.47\times 10^{-12}$&$7.79\times 10^{-12}$&$1.64\times 10^{-12}$& $1.92\times 10^{-11}$\\
		\hline
	\end{tabular}
	\label{table Example5.2sol&errorsN=14} 
\end{table}


\begin{table}[H]
	\caption {Comparison of numerical errors $E_{15,i}(x)$ $(i=0,1,2,3,4)$ with existing result for Example \ref{Example 5.2}.}
	\centering 
	\begin{tabular}{|c| c| c |c|c|c|c|} 
		\hline
		$x$&$E_{15,0}(x)$&$E_{15,1}(x)$&$E_{15,2}(x)$&$E_{15,3}(x)$&$E_{15,4}(x)$&BCM \cite{Aourir2025}
		\\[0.5ex] 
		\hline 
		$0.1$ &$2.55\times 10^{-14}$&$5.05\times 10^{-14}$&$1.76\times 10^{-13}$&$3.37\times 10^{-13}$&$5.12\times 10^{-14}$& $-$
		\\
		\hline
		$0.2$ &$9.26\times 10^{-14}$&$1.26\times 10^{-13}$&$1.77\times 10^{-13}$&$2.45\times 10^{-13}$&$8.11\times 10^{-14}$& $-$
		\\
		\hline
		$0.3$ &$1.07\times 10^{-13}$&$1.76\times 10^{-13}$&$6.88\times 10^{-14}$&$1.83\times 10^{-13}$&$9.32\times 10^{-14}$&$-$
		\\
		\hline
		$0.4$ &$1.03\times 10^{-13}$&$1.29\times 10^{-13}$&$2.28\times 10^{-13}$&$3.44\times 10^{-13}$&$6.62\times 10^{-14}$&$-$  
		\\
		\hline
		$0.5$ &$9.77\times 10^{-14}$&$1.73\times 10^{-13}$&$1.11\times 10^{-13}$&$2.61\times 10^{-13}$&$9.93\times 10^{-14}$&$-$ 
		\\
		\hline
		$0.6$ &$8.93\times 10^{-14}$&$1.00\times 10^{-13}$&$2.63\times 10^{-13}$&$3.98\times 10^{-13}$&$7.24\times 10^{-14}$&$-$ 
		\\
		\hline
		$0.7$ &$7.97\times 10^{-14}$&$1.46\times 10^{-13}$&$1.74\times 10^{-13}$&$3.66\times 10^{-13}$&$9.69\times 10^{-14}$&$-$
		\\
		\hline
		$0.8$ &$6.25\times 10^{-14}$&$5.45\times 10^{-14}$&$2.98\times 10^{-13}$&$5.03\times 10^{-13}$&$9.10\times 10^{-15}$&$-$ 
		\\
		\hline
		$0.9$ &$8.77\times 10^{-15}$&$7.00\times 10^{-14}$&$3.31\times 10^{-13}$&$5.11\times 10^{-13}$&$7.58\times 10^{-14}$&$-$ 
		\\
		\hline
		$1.0$ &$1.11\times 10^{-16}$&$4.47\times 10^{-14}$&$2.98\times 10^{-13}$&$6.12\times 10^{-13}$&$1.46\times 10^{-13}$&$-$ 
		\\
		\hline\hline
		$L_{\infty}$ &$1.07\times 10^{-13}$&$1.76\times 10^{-13}$&$3.31\times 10^{-13}$&$6.12\times 10^{-13}$&$1.46\times 10^{-13}$&$2.99\times 10^{-12}$  \\
		\hline
		$L_{2}$  &$2.43\times 10^{-13}$&$3.71\times 10^{-13}$&$7.20\times 10^{-13}$&$1.25\times 10^{-12}$&$2.72\times 10^{-13}$& $-$ \\
		\hline
	\end{tabular}
	\label{table Example5.2sol&errorsN=15} 
\end{table}

\begin{table}[H]
	\caption {Comparison of numerical solutions $\phi_{12,i}(x)$ $(i=0,1,2,3,4)$ with the exact solution, and other published results for Example \ref{Example 5.3}.}
	\centering 
	\begin{tabular}{|c| c| c |c|c|c|c|c|} 
		\hline
		$x$&$\phi_{{\rm ex.}}(x)$&$\phi_{12,0}(x)$&$\phi_{12,1}(x)$&$\phi_{12,2}(x)$&$\phi_{12,3}(x)$&$\phi_{12,4}(x)$&BCM \cite{Aourir2025}
		\\[0.5ex] 
		\hline 
		$0.1$ &$1.01005$&$1.01005$&$1.01005$&$1.01005$&$1.01005$&$1.01005$&$1.01005$
		\\
		\hline
		$0.2$ &$1.04081$&$1.04081$&$1.04081$&$1.04081$&$1.04081$&$1.04081$& $1.04081$
		\\
		\hline
		$0.3$ &$1.09417$&$ 1.09417$&$1.09417$&$ 1.09417$&$1.09417$&$ 1.09417$&$1.09417$
		\\
		\hline
		$0.4$ &$1.17351$&$ 1.17351$&$1.17351$&$ 1.17351$&$1.17351$&$ 1.17351$& $1.17351$
		\\
		\hline
		$0.5$ &$1.28403$&$ 1.28403$&$1.28403$&$ 1.28403$&$1.28403$&$ 1.28403$& $1.28403$
		\\
		\hline
		$0.6$ &$1.43333$&$ 1.43333$&$1.43333$&$ 1.43333$&$1.43333$&$ 1.43333$& $1.43333$
		\\
		\hline
		$0.7$ &$1.63232$&$ 1.63232$&$1.63232$&$ 1.63232$&$1.63232$&$ 1.63232$&$1.63232$
		\\
		\hline
		$0.8$ &$1.89648$&$ 1.89648$&$1.89648$&$ 1.89648$&$1.89648$&$ 1.89648$&$1.89648$
		\\
		\hline
		$0.9$ &$2.24791$&$ 2.24791$&$2.24791$&$ 2.24791$&$2.24791$&$ 2.24791$ & $ 2.24791$
		\\
		\hline
		$1.0$ &$2.71828$&$ 2.71828$&$2.71828$&$ 2.71828$&$2.71828$&$ 2.71828$& $2.71828$
		\\
		\hline
	\end{tabular}
	\label{table Example 5.3 num sol} 
\end{table}

\begin{table}[H]
	\caption {Comparison of numerical errors $E_{12,i}(x)$ $(i=0,1,2,3,4)$ with existing error for Example \ref{Example 5.3}.}
	\centering 
	\begin{tabular}{|c| c| c |c|c|c|c|} 
		\hline
		$x$&$E_{12,0}(x)$&$E_{12,1}(x)$&$E_{12,2}(x)$&$E_{12,3}(x)$&$E_{12,4}(x)$&BCM \cite{Aourir2025}
		\\[0.5ex] 
		\hline 
		$0.1$ &$1.12\times 10^{-10}$&$1.37\times 10^{-10}$&$1.44\times 10^{-10}$&$2.51\times 10^{-10}$&$5.81\times 10^{-11}$& $-$
		\\
		\hline
		$0.2$ &$1.51\times 10^{-10}$&$2.16\times 10^{-10}$&$3.07\times 10^{-11}$&$3.23\times 10^{-11}$&$8.11\times 10^{-11}$& $-$
		\\
		\hline
		$0.3$ &$1.53\times 10^{-10}$&$2.27\times 10^{-10}$&$1.54\times 10^{-10}$&$2.14\times 10^{-10}$&$1.05\times 10^{-10}$&$-$
		\\
		\hline
		$0.4$ &$1.20\times 10^{-10}$&$2.04\times 10^{-10}$&$2.79\times 10^{-11}$&$1.26\times 10^{-10}$&$1.52\times 10^{-10}$&$-$  
		\\
		\hline
		$0.5$ &$2.31\times 10^{-11}$&$2.82\times 10^{-11}$&$5.33\times 10^{-11}$&$5.79\times 10^{-11}$&$1.42\times 10^{-10}$&$-$ 
		\\
		\hline
		$0.6$ &$1.15\times 10^{-10}$&$1.89\times 10^{-10}$&$1.02\times 10^{-10}$&$3.04\times 10^{-10}$&$6.02\times 10^{-11}$&$-$ 
		\\
		\hline
		$0.7$ &$2.01\times 10^{-10}$&$3.24\times 10^{-10}$&$9.20\times 10^{-11}$&$1.12\times 10^{-10}$&$7.52\times 10^{-11}$&$-$
		\\
		\hline
		$0.8$ &$1.98\times 10^{-10}$&$2.65\times 10^{-10}$&$1.64\times 10^{-10}$&$2.00\times 10^{-10}$&$1.34\times 10^{-10}$&$-$ 
		\\
		\hline
		$0.9$ &$1.53\times 10^{-10}$&$2.22\times 10^{-10}$&$3.44\times 10^{-11}$&$3.70\times 10^{-11}$&$3.21\times 10^{-11}$&$-$ 
		\\
		\hline
		$1.0$ &$1.53\times 10^{-11}$&$1.93\times 10^{-11}$&$2.40\times 10^{-10}$&$1.09\times 10^{-10}$&$3.45\times 10^{-10}$&$-$ 
		\\
		\hline\hline
		$L_{\infty}$ &$2.01\times 10^{-10}$&$3.24\times 10^{-10}$&$2.40\times 10^{-10}$&$3.04\times 10^{-10}$&$3.45\times 10^{-10}$& $5.16\times 10^{-9}$ \\
		\hline
		$L_{2}$ &$4.37\times 10^{-10}$&$6.49\times 10^{-10}$&$3.92\times 10^{-10}$&$5.37\times 10^{-10}$&$4.61\times 10^{-10}$& $-$\\
		\hline
	\end{tabular}
	\label{table Example5.3sol&errorsN=12} 
\end{table}


\begin{table}[H]
	\caption {Comparison of numerical errors $E_{14,i}(x)$ $(i=0,1,2,3,4)$ with existing errors for Example \ref{Example 5.3}.}
	\centering 
	\begin{tabular}{|c| c| c |c|c|c|c|} 
		\hline
		$x$&$E_{14,0}(x)$&$E_{14,1}(x)$&$E_{14,2}(x)$&$E_{14,3}(x)$&$E_{14,4}(x)$&BCM \cite{Aourir2025}
		\\[0.5ex] 
		\hline 
		$0.1$ &$9.81\times 10^{-13}$&$3.23\times 10^{-13}$&$2.02\times 10^{-12}$&$4.60\times 10^{-12}$&$7.29\times 10^{-14}$& $1.46\times 10^{-13}$
		\\
		\hline
		$0.2$ &$1.95\times 10^{-13}$&$8.43\times 10^{-13}$&$4.80\times 10^{-13}$&$1.27\times 10^{-13}$&$7.68\times 10^{-13}$& $4.67\times 10^{-13}$
		\\
		\hline
		$0.3$ &$6.44\times 10^{-13}$&$1.46\times 10^{-12}$&$3.62\times 10^{-13}$&$2.29\times 10^{-12}$&$1.21\times 10^{-12}$&$7.03\times 10^{-13}$
		\\
		\hline
		$0.4$ &$5.99\times 10^{-13}$&$1.08\times 10^{-12}$&$7.55\times 10^{-13}$&$1.30\times 10^{-14}$&$1.38\times 10^{-12}$&$8.23\times 10^{-13}$  
		\\
		\hline
		$0.5$ &$1.41\times 10^{-13}$&$3.11\times 10^{-13}$&$3.17\times 10^{-13}$&$2.53\times 10^{-12}$&$1.48\times 10^{-12}$&$1.11\times 10^{-12}$ 
		\\
		\hline
		$0.6$ &$4.55\times 10^{-13}$&$1.03\times 10^{-12}$&$1.74\times 10^{-13}$&$1.10\times 10^{-12}$&$1.22\times 10^{-12}$&$1.39\times 10^{-12}$ 
		\\
		\hline
		$0.7$ &$6.45\times 10^{-13}$&$1.52\times 10^{-12}$&$4.55\times 10^{-13}$&$2.45\times 10^{-12}$&$1.16\times 10^{-12}$&$1.98\times 10^{-12}$
		\\
		\hline
		$0.8$ &$3.18\times 10^{-14}$&$1.04\times 10^{-12}$&$8.47\times 10^{-13}$&$3.39\times 10^{-13}$&$1.84\times 10^{-12}$&$2.44\times 10^{-12}$ 
		\\
		\hline
		$0.9$ &$1.51\times 10^{-12}$&$9.19\times 10^{-13}$&$1.29\times 10^{-12}$&$9.92\times 10^{-13}$&$3.12\times 10^{-12}$&$3.46\times 10^{-13}$ 
		\\
		\hline
		$1.0$ &$1.37\times 10^{-13}$&$5.76\times 10^{-13}$&$2.17\times 10^{-12}$&$8.84\times 10^{-13}$&$3.26\times 10^{-12}$&$5.95\times 10^{-11}$ 
		\\
		\hline\hline
		$L_{\infty}$ &$1.51\times 10^{-12}$&$1.52\times 10^{-12}$&$2.17\times 10^{-12}$&$4.60\times 10^{-12}$&$3.26\times 10^{-12}$&$5.95\times 10^{-11}$  \\
		\hline
		$L_{2}$  &$2.17\times 10^{-12}$&$3.14\times 10^{-12}$&$3.53\times 10^{-12}$&$6.48\times 10^{-12}$&$5.73\times 10^{-12}$& $5.96\times 10^{-11}$ \\
		\hline
	\end{tabular}
	\label{table Example5.3sol&errorsN=14} 
\end{table}

\section{Conclusion}
In this paper, we have developed a collocation method based on shifted Jacobi polynomials for numerical solutions of Volterra-type pantograph integrodifferential equations  arising in real-life phenomena. The significance and scholarly contribution of this research is that the present approach, shifted Jacobi collocation method, has as special cases, five other collocation techniques, namely, shifted Legendre collocation method, shifted Chebyshev collocation method of the first kind, shifted Chebyshev collocation method of the second kind, shifted Chebyshev collocation method of the third kind, and shifted Chebyshev collocation method of the fourth kind. The application of this 5-in-1 method to the pantograph delay integrodifferential equation of Volterra type is a significant contribution of this paper. The applicability and reliability of the present technique was tested and validated with three numerical examples. The results obtained revealed that Jacobi polynomials as basis functions are a reliable and useful family of orthogonal polynomials whose applications are not only restricted within the theory of special functions. The findings of this research highlight this technique as a very efficient and reliable approach for solving Volterra-type integrodifferential equations with proportional delays. The method employed in this research can be extended to fractional order pantograph delay (linear and nonlinear) integrodifferential equations of Volterra type, and of Volterra-Fredholm type. The method can also be applied to higher order pantograph delay integrodifferential equations of Volterra and Fredholm types.

\appendix		

\section{Zeros of Shifted Jacobi Polynomials}

Table \ref{table zeroP8912}  illustrates the zeros $\mathsf{x}^{N}_{q}(\alpha,\beta)$ $(0\leq q\leq N)$  of the  shifted Jacobi polynomials $\mathsf{J}^{(\alpha,\beta)}_{N+1}(x)$ $(N=7,8,11)$ for different values of parameters $\alpha,\beta>-1$.

\begin{table}[H]
	\caption { Explicit values of the zeros $\mathsf{x}_{q}^{N}(\alpha,\beta)$ $(0\leq q\leq N)$ of $\mathsf{J}^{(\alpha,\beta)}_{N+1}(x)$  $(N=7,8,11)$.}
	\centering 
	\begin{tabular}{|c| c| c |c|c|c|} 
		\hline
		$q$ & $\mathsf{x}_{q}^{7}(0,0)$&  $\mathsf{x}_{q}^{7}\left(-\frac{1}{2},-\frac{1}{2} \right) $ &$\mathsf{x}_{q}^{7}\left(\frac{1}{2},\frac{1}{2} \right) $& $\mathsf{x}_{q}^{7}\left(-\frac{1}{2},\frac{1}{2} \right) $ &$\mathsf{x}_{q}^{7}\left(\frac{1}{2},-\frac{1}{2} \right) $
		\\ [0.5ex] 
		\hline 
		$0$ & $0.019855$ & $0.009607$ & $0.030154$  & $0.033764$ & $0.008513$
		\\
		\hline
		$1$ & $0.101667$ & $0.084265$ & $0.116978$  & $0.130496$ & $0.074891$
		\\
		\hline
		$2$ & $0.237234$  &$0.222215$  & $0.250000$  & $0.277131$ & $0.198683$
		\\
		\hline
		$3$ &$0.408283$  & $0.402455$ & $0.413176$  & $0.453866$ & $0.363169$
		\\
		\hline
		$4$ & $0.591717$ & $0.597545$ & $0.586824$ &  $0.636831$ & $0.546134$
		\\
		\hline
		$5$ & $0.762766$ & $0.777785$ & $0.750000$ &  $0.801317 $ & $0.722869$ 
		\\
		\hline
		$6$ & $0.898333$ & $0.915735$  &$0.883022$  & $0.925109$  & $0.869504$
		\\
		\hline
		$7$ &$0.980145$  &$0.990393$  & $0.969846$ & $0.991487$ &  $0.966236$\\
		\hline
		\hline
	\end{tabular}
%
	\begin{tabular}{|c| c| c |c|c|c|} 
		\hline
	$q$ & $\mathsf{x}_{q}^{8}(0,0)$&  $\mathsf{x}_{q}^{8}\left(-\frac{1}{2},-\frac{1}{2} \right) $ &$\mathsf{x}_{q}^{8}\left(\frac{1}{2},\frac{1}{2} \right) $& $\mathsf{x}_{q}^{8}\left(-\frac{1}{2},\frac{1}{2} \right) $ &$\mathsf{x}_{q}^{8}\left(\frac{1}{2},-\frac{1}{2} \right) $
		\\ [0.5ex] 
		\hline 
		$0$ & $0.015920$ & $0.007596$ & $0.024472$  & $0.027091$ & $0.006819$
		\\
		\hline
		$1$ & $0.081984$ & $0.066987$ & $0.095492$  & $0.105430$ & $0.060263$
		\\
		\hline
		$2$ & $0.193314$  &$0.178606$  & $0.206107$  & $0.226526$ & $0.161359$
		\\
		\hline
		$3$ &$0.337873$  & $0.328990$ & $0.345492$  & $0.377257$ & $0.299152$
		\\
		\hline
		$4$ & $0.500000$ & $0.500000$ & $0.500000$ &  $0.541290$ & $0.458710$
		\\
		\hline
		$5$ & $0.662127$ & $0.671010$ & $0.654508$ &  $0.700848 $ & $0.622743$ 
		\\
		\hline
		$6$ & $0.806686$ & $0.821394$  &$0.793893$  & $0.838642$  & $0.773474$
		\\
		\hline
		$7$ &$0.918016$  &$0.933013$  & $0.904508$ & $0.939737$ &  $0.894570$
		\\
		\hline
		$8$ & $0.984080$ & $0.992404$ & $0.975528$ & $0.993181$  &  $0.972909$
		\\
		\hline
		\hline
	\end{tabular}
%
	\begin{tabular}{|c| c| c |c|c|c|} 
		\hline

	$q$ & $\mathsf{x}_{q}^{11}(0,0)$&  $\mathsf{x}_{q}^{11}\left(-\frac{1}{2},-\frac{1}{2} \right) $ &$\mathsf{x}_{q}^{11}\left(\frac{1}{2},\frac{1}{2} \right) $& $\mathsf{x}_{q}^{11}\left(-\frac{1}{2},\frac{1}{2} \right) $ &$\mathsf{x}_{q}^{11}\left(\frac{1}{2},-\frac{1}{2} \right) $
		\\ [0.5ex] 
		\hline 
		$0$ & $0.009220$ & $0.004278$ & $0.010926$  & $0.015708$ & $0.003942$
		\\
		\hline
		$1$ & $0.047941$ & $0.038060$ & $0.057272$  & $0.061847$ & $0.035112$
		\\
		\hline
		$2$ & $0.115049$  &$0.103323$  & $0.125745$  & $0.135516$ & $0.095492$
		\\
		\hline
		$3$ &$0.206341$  & $0.195619$ & $0.215968$  & $0.232087$ & $0.181288$
		\\
		\hline
		$4$ & $0.316084$ & $0.308658$ & $0.322698$ &  $0.345492$ & $0.287110$
		\\
		\hline
		$5$ & $0.437383$ & $0.434737$ & $0.439732$ &  $0.468605$ & $0.406309$ 
		\\
		\hline
		$6$ & $0.562617$ & $0.565263$  &$0.560268$  & $0.593691$  & $0.531395$
		\\
		\hline
		$7$ &$0.683916$  &$0.691342$  & $0.677302$ & $0.712890$ &  $0.654509$
		\\
		\hline
		$8$ & $0.793659$ & $0.804381$ & $0.784032$ & $0.818712$  &  $0.767913$
		\\
		\hline
		$9$ & $0.884951$ & $0.896677$ & $0.874255$  & $0.904508$ & $0.864484$
		\\
		\hline
		$10$ & $0.952059$ & $0.961940$ & $0.942728$  & $0.964888$ & $0.938153$
		\\
		\hline
		$11$ & $0.990780$ & $0.995722$ & $0.985471$  & $0.996057$ & $0.984292$
		\\
		\hline
	\end{tabular}
	\label{table zeroP8912} 
\end{table}

Table \ref{table zeroP1415}  describes the zeros $\mathsf{x}^{N}_{q}(\alpha,\beta)$ $(0\leq q\leq N)$  of the  shifted Jacobi polynomials $\mathsf{J}^{(\alpha,\beta)}_{N+1}(x)$ $(N=13,14)$ for different values of parameters $\alpha,\beta>-1$.

\begin{table}[H]
	\caption { Explicit values of the zeros $\mathsf{x}_{q}^{N}(\alpha,\beta)$ $(0\leq q\leq N)$ of $\mathsf{J}^{(\alpha,\beta)}_{N+1}(x)$  $(N=13,14)$.}
	\centering 
	\begin{tabular}{|c| c| c |c|c|c|} 
		\hline
	$q$ & $\mathsf{x}_{q}^{13}(0,0)$&  $\mathsf{x}_{q}^{13}\left(-\frac{1}{2},-\frac{1}{2} \right) $ &$\mathsf{x}_{q}^{13}\left(\frac{1}{2},\frac{1}{2} \right) $& $\mathsf{x}_{q}^{13}\left(-\frac{1}{2},\frac{1}{2} \right) $ &$\mathsf{x}_{q}^{13}\left(\frac{1}{2},-\frac{1}{2} \right) $
		\\ [0.5ex] 
		\hline 
		$0$ & $0.006858$ & $0.003144$ & $0.010926$  & $0.011690$ & $0.002931$
		\\
		\hline
		$1$ & $0.035783$ & $0.028058$ & $0.043227$  & $0.046212$ & $0.026173$
		\\
		\hline
		$2$ & $0.086399$  &$0.076638$  & $0.095492$  & $0.101953$ & $0.071571$
		\\
		\hline
		$3$ &$0.156354$  & $0.146447$ & $0.165435$  & $0.176307$ & $0.137002$
		\\
		\hline
		$4$ & $0.242376$ & $0.233984$ & $0.250000$ &  $0.265796$ & $0.219406$
		\\
		\hline
		$5$ & $0.340444$ & $0.334860$ & $0.345491$ &  $0.366236$ & $0.314931$ 
		\\
		\hline
		$6$ & $0.445973$ & $0.444018$  &$0.447736$  & $0.472930$  & $0.419109$
		\\
		\hline
		$7$ &$0.554027$  &$0.555982$  & $0.552264$ & $0.580891$ &  $0.527069$
		\\
		\hline
		$8$ & $0.659556$ & $0.665140$ & $0.654509$ & $0.685069$  &  $0.633764$
		\\
		\hline
		$9$ & $0.757624$ & $0.766015$ & $0.749999$  & $0.780594$ & $0.734204$
		\\
		\hline
		$10$ & $0.843646$ & $0.853555$ & $0.834566$  & $0.862997$ & $0.823693$
		\\
		\hline
		$11$ & $0.913601$ & $0.923360$ & $0.904508$  & $0.928429$ & $0.898046$
		\\
		\hline
		$12$ & $0.964217$ & $0.971943$ & $0.956773$  & $0.973826$ & $0.953788$
		\\
		\hline
		$13$ & $0.993142$ & $0.996856$ & $0.989074$  & $0.997069$ & $0.988310$
		\\
		\hline
		\hline
	\end{tabular}
%
	\begin{tabular}{|c| c| c |c|c|c|} 
		\hline
$q$ & $\mathsf{x}_{q}^{14}(0,0)$&  $\mathsf{x}_{q}^{14}\left(-\frac{1}{2},-\frac{1}{2} \right) $ &$\mathsf{x}_{q}^{14}\left(\frac{1}{2},\frac{1}{2} \right) $& $\mathsf{x}_{q}^{14}\left(-\frac{1}{2},\frac{1}{2} \right) $ &$\mathsf{x}_{q}^{14}\left(\frac{1}{2},-\frac{1}{2} \right) $
		\\ [0.5ex] 
		\hline 
		$0$ & $0.006004$ & $0.002739$ & $0.009607$  & $0.010235$ & $0.002565$
		\\
		\hline
		$1$ & $0.031363$ & $0.024472$ & $0.038060$  & $0.040521$ & $0.022930$
		\\
		\hline
		$2$ & $0.075897$  &$0.066987$  & $0.084265$  & $0.089618$ & $0.062827$
		\\
		\hline
		$3$ &$0.137791$  & $0.128428$ & $0.146447$  & $0.155516$ & $0.120621$
		\\
		\hline
		$4$ & $0.214514$ & $0.206107$ & $0.222215$ &  $0.235518$ & $0.193947$
		\\
		\hline
		$5$ & $0.302924$ & $0.296632$ & $0.308658$ &  $0.326347$ & $0.279802$ 
		\\
		\hline
		$6$ & $0.399403$ & $0.396044$  &$0.402455$  & $0.424288$  & $0.374674$
		\\
		\hline
		$7$ &$0.500000$  &$0.500000$  & $0.500000$ & $0.525322$ &  $0.474675$
		\\
		\hline
		$8$ & $0.600597$ & $0.603956$ & $0.597547$ & $0.625331$  &  $0.575714$
		\\
		\hline
		$9$ & $0.697076$ & $0.703368$ & $0.691336$  & $0.720189$ & $0.673653$
		\\
		\hline
		$10$ & $0.785486$ & $0.793893$ & $0.777794$  & $0.806064$ & $0.764480$
		\\
		\hline
		$11$ & $0.862209$ & $0.871572$ & $0.853541$  & $0.879365$ & $0.844487$
		\\
		\hline
		$12$ & $0.924103$ & $0.933013$ & $0.915747$  & $0.937187$ & $0.910378$
		\\
		\hline
		$13$ & $0.968637$ & $0.975528$ & $0.961932$  & $0.977059$ & $0.959481$
		\\
		\hline
		$14$ & $0.993996$ & $0.997261$ & $0.990395$  & $0.997438$ & $0.989764$
		\\
		\hline	\end{tabular}
	\label{table zeroP1415} 
\end{table}

			%
			%
			
\textbf{Conflicts of Interest:} The authors declare that there is no conflict of interest.
		
\textbf{Ethical Approval:}  This article does not contain any studies with human participants or animals performed by the authors. 

\textbf{Data Availability Statement:} No data are associated with this manuscript.

\textbf{Funding:} No funding was received for this research.

		\end{document}